\title{Radial projections along chains}
\author{Laurent Dufloux}
\newcommand{\R}{\mathbf{R}}
\newcommand{\C}{\mathbf{C}}
\newcommand{\GG}{\mathbf{G}}
\newcommand{\PP}{\mathbf{P}}
\newcommand{\Heis}{\mathcal{H}}
\newcommand{\Hyp}{\mathbf{H}_\mathbf{C}}
\newcommand{\Kor}{Kor\'{a}nyi~}
\newcommand{\pdc}{\mathbf{P}_{\mathbf{C}}}
\newtheorem{remark}{Remark}
\newtheorem{lemma}{Lemma}
\newtheorem{theorem}{Theorem}
\newtheorem{proposition}{Proposition}
\newtheorem{corollary}{Corollary}
\newtheorem{definition}{Definition}
\newtheorem*{theoremA}{Theorem A}
\newtheorem*{theoremB}{Theorem B}
\begin{document}
\maketitle
\abstract{We state strong Marstrand properties for two related families of fractals in Heisenberg groups $\Heis^d$: limit sets 
of Schottky groups in good position, and attractors of self-similar IFS enjoying the open set condition in the quotient $\Heis^d/Z$.
For such a fractal $X$, we show that the dimension of $\pi_x X$ does not depend on $x \in \Heis^d$, where $\pi_x$ denotes the radial 
projection along chains passing through $x$. This follows from a local entropy averages argument due to Hochman and Shmerkin.}

\section{Introduction}
Recall Marstrand's classical projection Theorem in the plane: if $A$ is a Borel subset of $\R^2$, of Hausdorff dimension $s$, then the projection of $A$ in almost every direction 
has Hausdorff dimension $\inf \{1,s\}$; see \cite{Marstrand1954}. This result was proved again by Kaufman \cite{Kaufman1968} using potential methods, then generalized to higher dimensions by Mattila \cite{Mattila1975}, 
\cite{Mattila1995}. Several authors have since been working on 
projection Theorems, strengthening or generalizing this basic result in many ways. One direction of research is to look at special families of fractals, in order to prove deterministic results; 
that is, to compute the dimension of the projection in a fixed direction (or usually an explicit set of directions). 

Let us quote as an example a special case of Hochman-Shmerkin Theorem 1.6 from \cite{Hochman2012}: if $f$ and $g$ are contracting similarities of $\R^2$ whose orthogonal parts generate a dense subgroup of $\mathbf{SO}(2)$
and such that the iterated function system $\{f, g \}$ satisfies the strong separation condition, then, letting $X$ be the attractor of this IFS, for any angle $\theta$
\[ \dim (\pi_\theta X) = \inf\{1, \dim (X)\} \]
where $\pi_\theta$ is the orthogonal projection onto the line of angle $\theta$ in $\R^2$. We may say that $X$ possesses a strong Marstrand property with respect 
to the family of all orthogonal projections (in other words, the exceptional set in Marstrand theorem, 
 is in fact empty in this case).

Here, as in the rest of the paper, we denote by $\dim$ the Hausdorff dimension of a  set.

Theorem 8.1 in \cite{Hochman2012} is the main technical device that allows Hochman and Shmerkin to prove their projection results. Our motivation in this paper is 
to apply the arguments of Hochman and Shmerkin in the setting of Heisenberg groups. Thus Theorem \ref{th.cp-th} below is our attempt at stating a Heisenberg version of Theorem 8.1 
from \cite{Hochman2012}. The reader will soon realize that our Theorem \ref{th.cp-th} is not quite as useful as Hochman-Shmerkin's Theorem 8.1 -- but we nonetheless provide 
some applications below. 

Projection Theorems in Euclidean spaces usually deal with the most obvious mappings: linear projections onto subspaces. In Heisenberg groups, there are not so many subspaces and 
coming up with an interesting family of projections is in itself a non-obvious problem. We refer the reader to \cite{Balogh2012, Balogh2013, Faessler2016, Hovila2014} for some versions 
of Marstrand's projection Theorem. In \cite{Dufloux2017a} we introduced radial projections along chains and showed 
that they satisfy a property known as ``transversality'', which immediately 
gives a version of Marstrand Theorem. 

In \cite{Dufloux2018a}, Ville Suomala and the author looked at random cut-out sets in the first Heisenberg group and computed their dimension 
with respect to both the subriemannian and the Riemannian metric; we also constructed random cut-out set in the boundary of complex hyperbolic plane, $\partial \Hyp^2$, and proved 
that with positive probability such a cut-out set satisfies a strong Marstrand property (w.r.t. radial projections along chains). This is an analogue of the fact that random cut-out sets in Euclidean spaces also satisfy a 
strong Marstrand property (with respect to orthogonal projections) with positive probability.

The main result of this paper, as we said before, is a version of Hochman-Shmerkin projection Theorem, for measures in Heisenberg groups that enjoy a form of ergodic-theoretic self-similarity.
We refer the reader to Theorem \ref{th.cp-th} (for CP-distributions) and Theorem \ref{th.proj-efd} (for Fractal Distributions) for precise
statements. As applications, 
we obtain the following:

\begin{theoremA}
    Let $\Gamma$ be a Schottky subgroup of $\mathbf{PU}(1,d+1)$ ($d \geq 1$) in good position. The limit set $\Lambda_\Gamma$ satisfies 
    the following property:
    for any $\xi \in \partial \Hyp^{d+1}$, the dimension of $\Lambda_\Gamma$ transverse to the foliation by chains passing through $\xi$ is equal 
    to the Poincar\'{e} exponent $\delta_\Gamma$.
\end{theoremA}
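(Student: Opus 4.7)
The plan is to reduce Theorem A to the general projection theorem for Fractal Distributions (Theorem \ref{th.proj-efd}) applied to the Patterson--Sullivan measure of the Schottky group. First I would recall the Patterson--Sullivan construction: to the Schottky subgroup $\Gamma$ one associates a quasi-conformal measure $\mu_\Gamma$ on $\Lambda_\Gamma \subset \partial \Hyp^{d+1}$ of dimension $\delta_\Gamma$ with respect to the \Kor metric. Under the good position hypothesis, $\mu_\Gamma$ is exact-dimensional and its support is precisely $\Lambda_\Gamma$, so it suffices to prove the dimension equality $\dim(\pi_\xi)_* \mu_\Gamma = \delta_\Gamma$ (up to the saturation at the target dimension), since this forces the corresponding equality for the limit set.

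Next I would exploit the self-similarity of $\mu_\Gamma$ coming from the Schottky dynamics. The Schottky generators act on $\partial \Hyp^{d+1}$ as ``non-conformal contractions'' in the \Kor metric and, restricted to a suitable fundamental domain, they provide a symbolic coding of $\Lambda_\Gamma$ by a full shift. This structure furnishes a natural candidate CP-distribution (or, after passing to the scenery flow, a Fractal Distribution) $P$ supported on limits of zoom-ins of $\mu_\Gamma$ along this coding. The verification boils down to two ingredients: (i) the bounded distortion of the \Kor metric under Schottky elements acting far from their fixed points, which is guaranteed by the good position assumption, and (ii) a Shannon--McMillan--Breiman argument showing that the scaling scenery along $\mu_\Gamma$-typical points converges to $P$. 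This is the analogue in the Heisenberg setting of how a self-similar measure generates an ergodic Fractal Distribution in the Euclidean case.

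Once the Fractal Distribution $P$ is in hand, I would invoke Theorem \ref{th.proj-efd} to conclude. The theorem says that the dimension of the projection $(\pi_\xi)_* \mu$ is constant $P$-almost surely and equal to a quantity $E(P,\xi)$ computed from the fiber entropies of $P$ under radial projection, \emph{uniformly} in the basepoint $\xi$. Since $P$ is ergodic and $\mu_\Gamma$ is exact-dimensional of dimension $\delta_\Gamma$, we must have $E(P,\xi) = \min\{\delta_\Gamma, \text{transverse dim}\}$, i.e.\ there is no dimension loss in any chain-direction through any $\xi$.

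The main obstacle, as in the Euclidean analogue, is step two: showing that the Patterson--Sullivan measure $\mu_\Gamma$ genuinely generates an \emph{ergodic} Fractal Distribution in the chain-adapted scenery. The difficulty has two sides. Geometrically, the Schottky contractions are not \Kor-similarities but only quasi-conformal for this metric, so some care is needed to handle distortion when one zooms in and to compare the natural \Kor balls to the ``dyadic'' filtration underlying the CP-machinery. Dynamically, one must check that the coding map from the full shift onto $\Lambda_\Gamma$ intertwines the shift with a magnification operator in a way compatible with the definition of a Fractal Distribution; the good position hypothesis is precisely what should decouple the coordinates transverse and tangent to chains and yield the required ergodicity. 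Once these points are settled, applying Theorem \ref{th.proj-efd} is essentially formal.
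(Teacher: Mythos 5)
Your overall strategy --- produce a fractal distribution adapted to the Patterson--Sullivan measure and feed it into Theorem \ref{th.proj-efd} --- matches the paper's, but there are two substantive divergences, one of which is a genuine gap. On the construction side, you propose to build the fractal distribution from the Schottky coding via a Shannon--McMillan--Breiman argument, and you identify the distortion of the generators in the \Kor metric as the main obstacle. The paper avoids all of this: it takes the Bowen--Margulis--Sullivan measure $m$ on $\Gamma\backslash G$, disintegrates it along $N\simeq\Heis^d$, and sets $P=\int\mathrm{d}m(x)\,\mathrm{Dirac}(\sigma(x)^*)$; that this is a fractal distribution, ergodic under every $S_t^*$, is quoted from earlier work and needs no coding and no distortion estimates. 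Your route is in principle a viable (if harder) alternative, but note that the paper's construction works for any Zariski-dense discrete subgroup with finite BMS measure --- good position plays no role at this stage.

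The genuine gap is in your last step. Theorem \ref{th.proj-efd} yields only the \emph{lower bound} $\dim(\pi_\xi\mu_\Gamma)\geq\int\mathrm{d}P^\square(\theta)\,\dim(\pi_Z\theta)$; the right-hand side (the transverse dimension of $\mu_\Gamma$ along $Z$) does not depend on $\xi$ and has no a priori relation to $\delta_\Gamma$. You assert that ergodicity of $P$ together with exact-dimensionality of $\mu_\Gamma$ forces ``no dimension loss in any chain-direction''. That inference is false: by the Ledrappier--Young formula, $\dim\mu=\dim(\pi_Z\mu)+\dim(\mu_u)$ where $\mu_u$ are the conditional measures on the fibers of $\pi_Z$, and for a general $\Gamma$ these conditionals can carry positive dimension, so $\pi_Z$ genuinely loses dimension; the paper explicitly states that computing this transverse dimension for general $\Gamma$ is open, and its remark after Theorem \ref{th.proj-efd} exhibits a fractal distribution for which the inequality is strict. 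The whole point of the good position hypothesis is to make $\pi_Z$ essentially injective on the limit set, so that the conditionals $\mu_u$ are Dirac masses and the transverse dimension equals $\delta_\Gamma$; this identification is quoted from \cite{Dufloux2017}. Your proposal invokes good position only for bounded distortion and ergodicity and never supplies the equality $\int\mathrm{d}P^\square(\theta)\,\dim(\pi_Z\theta)=\delta_\Gamma$, without which Theorem \ref{th.proj-efd} gives no useful bound. (The matching upper bound $\dim(\pi_\xi\Lambda_\Gamma)\leq\delta_\Gamma$ is immediate since $\pi_\xi$ is locally Lipschitz.)
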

Schottky groups in good position were defined in \cite{Dufloux2017}.

\begin{theoremB}
    Let $\mathcal F = \{ f_i\ ;\ 1 \leq i \leq k\}$ be a self-similar iterated function system in $\C^d$ ($d \geq 1$) satisfying the open set 
    condition, and fix a lift 
    $\widetilde{\mathcal F}= \{ \widetilde{f}_i\ ;\ 1 \leq i \leq k\}$
of $\mathcal F$ to the Heisenberg group $\Heis^d$. The attractor $\widetilde{X} \subset \Heis^d$ of $\widetilde{\mathcal F}$
enjoys the following property: for any $x \in \Heis^d$, the dimension of $\widetilde{X}$ transverse to the 
foliation by chains passing through $x$ is equal to 
\[ \dim(X) = \dim(\widetilde{X}) \]
\end{theoremB}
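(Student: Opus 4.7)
The plan is to deduce Theorem B from Theorem \ref{th.proj-efd}, the Fractal Distribution version of the main result, applied to a canonical self-similar measure supported on $\widetilde{X}$. Write $\pi_x$ for the radial projection along chains through $x$. The route has three steps: build a measure $\widetilde{\mu}$ on $\widetilde{X}$ whose scenery distribution is an ergodic Fractal Distribution; verify $\dim X = \dim \widetilde{X}$; apply Theorem \ref{th.proj-efd} and identify the constant it produces with $\dim X$.

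Take $\widetilde{\mu}$ to be the push-forward on $\widetilde{X}$ of the Bernoulli measure on $\{1,\dots,k\}^{\mathbf{N}}$ with weights $(p_i)$ chosen so that $\dim \widetilde{\mu} = \dim \widetilde{X}$. Because each $\widetilde{f}_i$ is a Kor\'anyi similarity of the same ratio $r_i$ as $f_i$, the symbolic coding carries over from $\C^d$ to $\Heis^d$ verbatim; under the open set condition, lifted to $\Heis^d$ by taking for OSC open set a Kor\'anyi neighbourhood projecting onto the Euclidean OSC set, $\widetilde{\mu}$ is exact dimensional and its scenery distribution is an ergodic Fractal Distribution. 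For the equality $\dim X = \dim \widetilde{X}$: the horizontal projection $p \colon \Heis^d \to \C^d$ is $1$-Lipschitz (Kor\'anyi to Euclidean) and sends $\widetilde{X}$ onto $X$, giving $\dim X \leq \dim \widetilde{X}$; the reverse inequality follows from a Moran cover of $\widetilde{X}$ by Kor\'anyi balls whose similarity dimension equals that of $\mathcal{F}$ in $\C^d$ (the contraction ratios being preserved by the lift), and hence equals $\dim X$ by the classical Moran theorem.

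Applying Theorem \ref{th.proj-efd} to $\widetilde{\mu}$ then yields that for every $x \in \Heis^d$, $\dim \pi_x \widetilde{\mu}$ equals the largest value attainable for a radial projection along chains of such a measure, which under the bound $\dim \widetilde{\mu} = \dim X \leq 2d$ is $\dim \widetilde{\mu}$ itself. Combined with $\dim \pi_x \widetilde{X} \geq \dim \pi_x \widetilde{\mu}$ and the trivial $\dim \pi_x \widetilde{X} \leq \dim \widetilde{X}$ coming from local Lipschitzness of $\pi_x$, this gives the claimed equality.

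The hard part will be Step 1: verifying that the lifted IFS $\widetilde{\mathcal{F}}$ really produces an ergodic Fractal Distribution in the Heisenberg setting. The Kor\'anyi metric weights horizontal and vertical distances differently, so the lifted OSC must be shown to furnish genuine separation at every Kor\'anyi scale, after which the Hochman-Shmerkin construction of the scenery distribution can be transposed. A secondary but still delicate point is the identification in Step 3 of the optimal projection value with $\dim \widetilde{\mu}$; this rests on the local entropy averages argument underlying Theorem \ref{th.cp-th} together with the transversality of the chain foliation established in \cite{Dufloux2017a}.
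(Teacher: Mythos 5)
There is a genuine gap at your Step 3, and it is the crux of the whole theorem. Theorem \ref{th.proj-efd} does \emph{not} produce ``the largest value attainable for a radial projection along chains''; it produces the specific lower bound
\[ \dim(\pi_x\widetilde{\mu}) \;\geq\; \int \mathrm{d}P^\square(\nu)\, \dim(\pi_Z\nu), \]
where $\pi_Z$ is the quotient map $\Heis^d\to\Heis^d/Z$. Nothing in that theorem identifies the right-hand side with $\min(\dim\widetilde{\mu},2d)$: the Remark following Theorem \ref{th.proj-efd} (Lebesgue measure on the center $Z$, for which $\dim\pi_Z\mu=0$ while $\dim\pi_h\mu=1$ for $h\notin Z$) shows the bound can be far from optimal, and the discussion after Theorem \ref{th.cp-th} says explicitly that the method yields no semi-continuity and hence no Marstrand-type optimality --- the applications only work in situations where one \emph{already knows} $\dim(\pi_Z\mu)$. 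Transversality from \cite{Dufloux2017a} cannot rescue this either, since it only gives statements for almost every base point, not for every $x$. So your proposal omits the one computation that makes the application go through: proving that $\dim(\pi_Z\widetilde{\mu})=\dim(\widetilde{\mu})$, and that the integral above equals this common value for the fractal distribution generated by $\widetilde{\mu}$.

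This is exactly where the hypothesis that the \emph{quotient} system $\mathcal F$ in $\C^d$ satisfies the open set condition enters, and you use that hypothesis only for a different purpose (setting up separation in $\Heis^d$ to build the fractal distribution). The paper's argument is: disintegrate $\widetilde{\mu}$ along $\pi_Z$, apply the Ledrappier--Young formula $\dim\widetilde{\mu}=\dim(\pi_Z\widetilde{\mu})+\dim(\widetilde{\mu}_u)$, and observe that the open set condition for $\mathcal F$ downstairs forces $\pi_Z$ to be injective on the attractor $\widetilde{X}$, so the conditional measures $\widetilde{\mu}_u$ are Dirac masses of dimension $0$ and $\dim(\pi_Z\widetilde{\mu})=\dim\widetilde{\mu}$. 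With that in hand, Theorem \ref{th.proj-efd} gives $\dim(\pi_x\widetilde{\mu})\geq\dim\widetilde{\mu}$ for every $x$, the reverse inequality is the trivial Lipschitz one, and your Step 2 together with the choice of weights making $\dim\widetilde{\mu}=\dim\widetilde{X}$ finishes the proof (the paper in fact obtains $\dim X=\dim\widetilde{X}$ as a byproduct of the same identity rather than by a separate Moran argument). Without the injectivity/Ledrappier--Young step your argument does not close.
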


By definition, a Borel set $A$ has transverse dimension $s$ with respect to a foliation $\mathcal F$ if there is a Lipschitz mapping $\pi$
that parametrizes the leaves of $\mathcal F$ and $\dim(\pi A)=s$; see 1.2 in \cite{Dufloux2017a} for a precise definition.

The plan of the paper is as follows: in section \ref{s.heisenberg} we define the classical (complex) Heisenberg groups $\Heis^d$, $d \geq 1$. 
We also define the radial projections along chains; we do this first in the boundary of complex hyperbolic spaces and then 
go back to Heisenberg groups via Heisenberg stereographic projections; we then compute the Pansu derivative of radial projections, this 
is needed in the proof of Theorem \ref{th.cp-th}. We also introduce the Strichartz cubes as a replacement for the familiar dyadic partitions 
in Euclidean spaces, this also is needed in the proof of Theorem \ref{th.cp-th} where we apply Hochman-Shmerkin local entropy averages inequality.
These preliminaries are then applied to the proof of Theorems \ref{th.cp-th} and \ref{th.proj-efd} in section \ref{s.lea}. Finally in section 
\ref{s.applications} we provide two simple examples of fractal distributions in $\Heis^d$ to which we can apply Theorem \ref{th.proj-efd}.

\section{Heisenberg groups}\label{s.heisenberg}
\subsection{Definition}
Fix $d \geq 1$. We endow $\C^d$ with the usual Hermitian inner product 
\[ u \cdot v = \sum_{k=1}^d \overline{u_k} v_k\text{.} \]
We denote by  $\Heis^d$ the (complex) Heisenberg group $\C^d \times \R$
endowed with the group law 
\[ (u,s) \cdot (v,t) = (u+v, s+t-\omega(u,v)) \]
where $\omega : \C^d \times \C^d \to \R$ is the $\R$-linear alternate form defined by 
\[ \omega(u,v) = \mathrm{Im}(u \cdot v) \]
($\mathrm{Im}(z)$ stands for the imaginary part of $z$).

The center $Z = \{0\} \times \R$ of $\Heis^d$ is also equal to its derived subgroup. We denote by $\pi_Z$ the quotient mapping 
\[ \pi_Z : \Heis^d \to \Heis^d/Z \simeq \C^d \]

For $(u,s) \in \Heis^d$, the \Kor gauge is  $\| (u,s) \| = (\| u \|^4 + 4s^2)^{1/4}$; the \Kor metric is defined by 
\[ d(h_1,h_2) = \| h_1^{-1} \cdot h_2 \| \]
for $h_1,h_2 \in \Heis^d$.

The \Kor metric is homogeneous with respect to the Heisenberg dilations
\[ \zeta \cdot (u,s) = (\zeta u, |\zeta|^2 s)\quad \text{for} \ \zeta \in \C \]
meaning $d(\zeta \cdot h_1, \zeta \cdot h_2) = |\zeta| d(h_1,h_2)$ for any $\zeta \in \C$ and $h_1,h_2 \in \Heis^d$.

The Haar measure on $\Heis^d$ will be denoted by $\lambda$. We normalize it in such a way that it coincides with the usual Lebesgue measure on 
$\C^d \times \R$. For any $r > 0$, the push-forward of $\lambda$ through the Heisenberg dilation of ratio $r$ is equal to $r^{2d+2} \lambda$.
In particular, $2d+2$ is the Hausdorff dimension of $\Heis^d$ (with respect to the \Kor metric).

\subsection{Radial projections along chains}
\subsubsection{Chains in $\Heis^d$}
If $f$ is a M\"{o}bius transformation of $\Heis^d \cup \{ \infty \}$, the image $f(Z)$ of the center is called a \emph{chain}. This includes 
Euclidean circles of $\C^d \times \{0\}$ centered at the origin; in fact, any chain is a left translate translate of either $Z$ 
or such a (uniquely defined) circle.

Chains of the  first kind (resp. second) kind are called \emph{infinite} (resp. \emph{finite}) chains. The image, through $\pi_Z$, of a finite 
chain is a Euclidean circle in $\Heis^d / \R \simeq \C^d$.

A basic property of chains is that through any distinct $h_1,h_2 \in \Heis^n$ there passes a unique chain. Any $h_1 \in \Heis^n$ thus yields
a foliation of $\Heis^n \setminus \{h_1 \}$ the leaves of which are the chains passing through $h_1$ (with $h_1$ removed).

\begin{remark}
    The chains we consider are called $\C^1$-chains in \cite{Goldman1999}. We do not consider $\C^k$-chains for $1 < k \leq d$ in this paper.
\end{remark}

\subsubsection{Radial projections}
In order to carry out  computations with chains, it is convenient to work with explicit projections. To every point $h \in \Heis^d$ 
we are going to associate a projection mapping 
\[ \pi_h : \Heis^d \setminus \{ h \} \to \C^d \]
that is Lipschitz on the complement of any compact neighbourhood of $h$, and whose fibers are the chains passing through $h$. 

First this, let us introduce the complex hyperbolic space 
\[ \Hyp^{d+1} = \{ x \in \pdc^{d+1}\ ; \langle x,x \rangle < 0 \} \]
where $\pdc^{d+1}$ is the usual complex projective space of (complex) dimension $d+1$, and $\langle \cdot, \cdot \rangle$ is the Hermitian 
form on $\C^{d+2}$ defined by 
\[ \langle x,y \rangle = \overline{x_0} y_{d+1} + \overline{x_{d+1}} y_0 - \sum_{k=1}^{d} \overline{x_k} y_k \]
The notation $x^\dagger$ denotes the orthogonal, with respect to this Hermitian form, of the complex line spanned by $x \in \C^{d+2}$, $x \neq 0$.

The visual boundary $\partial \Hyp^{d+1}$ is equal to 
\[ \partial \Hyp^{d+1} = \{ x \in \pdc^{d+1}\ ;\ \langle x,x \rangle = 0 \} \]
Let $f_0 = (1,0 \ldots , 0) \in \C^{d+2}$; the ``Heisenberg coordinates'' mapping 
\[ \Phi: \Heis^d \to \partial \Hyp^{d+1} \setminus \{[f_0] \} \]
that maps $(u,s) \in \Heis^d$ to the projective vector 
\[ \left[ \frac{\|u\|^2}{2} + is : \overline{u_1} : \ldots : \overline{u_d} : 1 \right] \in \partial \Hyp^{d+1} \subset \pdc^{d+1} \]
is a locally biLipschitz homeomorphism, when $\partial \Hyp^{d+1}$ is endowed with the usual Gromov-Bourdon metric 
\[ d(x,y) = \sqrt{\frac{| \langle x,y \rangle |}{\|x\| \cdot \|y\|}} \]
For any complex projective line $L \subset \pdc^{d+1}$, if the intersection $L \cap \Hyp^{d+1}$ is non-empty, we say that this intersection 
is a \emph{chain}. The mapping $\Phi$ then defined a bijection between \emph{chains} in $\Heis^d$ and \emph{chains} in $\partial \Hyp^{d+1}$. 

Parametrizing chains in $\Heis^d$ is thus equivalent to parametrizing chains in $\partial \Hyp^{d+1}$. Note that 
infinite chains of $\Heis^d$ correspond to chains passing through $f_0$ in $\partial \Hyp^{d+1}$.

Now fix $x \in \partial \Hyp^{d+1}$ and let us describe a projection mapping $\partial \Hyp^{d+1} \setminus \{x\}$ 
that parametrizes the chains passing though $x$. For a finite-dimensional complex vector space $E$,
we denote by $\GG^k (E)$ the Grassmannian of $k$-vectors of $E$; the exterior (progressive) product will be denoted using the symbol $\vee$, whereas the 
regressive product will be denoted by $\wedge$. The regressive product is well-defined up to the choice of a basis of $E$; here $E=\C^{d+2}$ 
and we choose the usual canonical basis.

Let first $\mathcal Q$ be the $\C$-linear isomorphism $\C^{d+2} \to \GG^{d+1} (\C^{d+2})$ defined by the relation 
\[ \langle x , y \rangle\ \mathbf{f} = \mathcal Q(x) \vee y \]
for all $y \in \C^{d+2}$, where, letting $(f_0,\ldots,f_{d+1})$ be the canonical basis of $\C^{d+2}$,  $\mathbf{f}$ denotes the element 
\[ \mathbf{f} = f_0 \vee f_1 \vee \ldots \vee f_{d+1} \in \GG^{d+2}(\C^{d+2}) \simeq \C \]

Note that $\mathcal Q(x)$ actually belongs to $\GG^{d+1} (x^\dagger)$ by definition. 

Fix distinct non-zero vectors $x,y \in \C^{d+2}$ such that their images in $\pdc^{d+1}$ belong to $\partial \Hyp^{d+1}$;
the regressive product $\mathcal Q(x) \wedge \mathcal Q(y)$ is an element of $\GG^{d} (x^\dagger)$
(as well as of $\GG^{d}(y^\dagger)$ but we choose to consider $x$ fixed and we see $y$ as a variable) that belongs to 
 the complement of the vector subspace 
\[ \mathcal A(x) = \{ \mathbf{u} \in \GG^d (x^\dagger)\ ;\ x \vee \mathbf{u} = 0 \} \]

In other words, for $x \in \partial \Hyp^{d+1}$ fixed,  $y \mapsto [ \mathcal Q(x) \wedge \mathcal Q(y) ]$ defines a mapping 
from $\partial \Hyp^{d+1} \setminus \{x\}$ to $\PP_\C (\GG^d (x^\dagger)) \setminus \PP (\mathcal A(x))$. This mapping parametrizes the chains 
passing through $x$.

We are almost done: since $\PP_\C (\GG^d (x^\dagger))$ is a $d$-dimensional projective line, and $\mathcal A(x)$ is a hyperplane of
 $\GG^d (x^\dagger)$, the complement $\PP_\C (\GG^d (x^\dagger)) \setminus \PP_\C(A(x))$ identifies
  with the affine space $\mathcal A(x) \simeq \C^d$. An explicit identification is obtained as follow: let $\hat{\mathbf{x}}$ 
be an non-zero element of $\GG^d (x^\dagger)$ orthogonal to $\mathcal A(x)$ with respect to the Hermitian inner product on $\GG^d (x^\dagger)$.
The mapping 
\[ [\mathbf{u}] \mapsto \frac{\mathbf{u}}{\hat{\mathbf{x}} \cdot \mathbf{u}} - \hat{\mathbf{x}} \]
is an affine isomorphism from $\PP_\C (\GG^d (x^\dagger)) \setminus \PP_\C(A(x))$ to $\mathcal A(x)$. Note that $\mathbf{u}$ is an element of 
$\GG^d (x^\dagger))$ and the right-hand side depends only on the projective class $[\mathbf{u}]$.

At this point we have, for a fixed $x \in \partial \Hyp^{d+1}$, a mapping $\partial \Hyp^{d+1} \to \mathcal A(x) \simeq \C^d$ that parametrizes the chains
passing through $x$. Now  compose this mapping with the Heisenberg coordinates $\Phi: \Heis^d \to \partial \Hyp^{d+1}$ to obtain 
the required projection mapping parametrizing chains passing through $\Phi^{-1}(x)$ in $\Heis^d$.

Explicitly, for $h \in \Heis^d$, and $x = \Phi(h)$ as above,  we have 
\[ \pi_h (h') = \frac{\mathcal Q(x) \vee \mathcal Q (\Phi(h'))}{\hat{\mathbf{x}} \cdot (\mathcal Q(x) \vee \mathcal Q (\Phi(h')))} -
 \hat{\mathbf{x}} \in \mathcal A(\Phi(h)) \]

This mapping is defined in $\Heis^d \setminus \{ h \}$. 

\begin{lemma}
    For any $h \in \Heis^d$, the mapping $\pi_h$ defined above satisfies the following:
    \begin{enumerate}
        \item $\pi_h$ parametrizes the chains passing through $h$: for any $u \in \mathcal A(\Phi(h))$, $\pi_h^{-1}(u)$ 
        is a chain passing through $h$ (with $h$ removed);
        \item $\pi_h$ is locally Lipschitz.
        \item If $T : \Heis^d \to \Heis^d$ is a Heisenberg similarity transformation, there is a locally bilipschitz homeomorphism 
        $f : \mathcal A(\Phi(T^{-1}(h))) \to \mathcal A(\Phi(h))$ such that $\pi_h \circ T = f \circ \pi_{T^{-1}(g)}$.
    \end{enumerate}
\end{lemma}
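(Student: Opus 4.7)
The plan is to exploit the factorization $\pi_h = \iota_x\circ \psi_x\circ \Phi$, where $x = \Phi(h)$, $\psi_x(y) = [\mathcal Q(x)\wedge \mathcal Q(y)]$, and $\iota_x$ is the affine chart from the text. I would handle (1) and (3) directly from this presentation, via Hermitian duality and the projective equivariance of $\mathcal Q$ respectively, and then use (3) (extended from Heisenberg similarities to general Möbius transformations) to reduce (2) to an obvious model case.

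For (1), the key point is that $\mathcal Q(x)\wedge \mathcal Q(y)$, as an element of $\GG^d(\C^{d+2})$, represents the codimension-two subspace $x^\dagger \cap y^\dagger$. By Hermitian duality, this subspace is the annihilator of the complex projective line $\PP_\C(\C x + \C y)\subset \pdc^{d+1}$, so its projective class depends only on, and determines, this line. The fibers of $\psi_x$ are therefore the intersections $L\cap (\partial\Hyp^{d+1}\setminus\{[x]\})$ as $L$ ranges over the complex projective lines through $[x]$, that is, the chains through $[x]$ with $[x]$ removed. Since $\Phi$ is a chain-preserving bijection and $\iota_x$ is an affine isomorphism, the fibers of $\pi_h$ are the chains through $h$ with $h$ removed.

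For (3), a Heisenberg similarity $T$ lifts to a Möbius transformation of $\partial\Hyp^{d+1}$ fixing $[f_0]$, coming from a linear $\tilde T\in \mathbf{GL}(\C^{d+2})$ that preserves $\langle\cdot,\cdot\rangle$ up to a positive real scalar. Using the defining relation of $\mathcal Q$ together with the identity $(\wedge^{d+1}\tilde T)\mathbf u\vee \tilde T v = (\det\tilde T)(\mathbf u\vee v)$, one finds that $\mathcal Q(\tilde T x)$ equals $(\wedge^{d+1}\tilde T)(\mathcal Q(x))$ up to a nonzero scalar. Hence $\mathcal Q(\tilde Tx)\wedge \mathcal Q(\tilde Ty)$ is a scalar multiple of the image of $\mathcal Q(x)\wedge \mathcal Q(y)$ under a fixed linear isomorphism $\GG^d(x^\dagger)\to \GG^d((\tilde Tx)^\dagger)$. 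Projectivizing and composing with the affine charts $\iota$ on both sides yields a locally biLipschitz $f$.

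For (2), I would observe that the computation in (3) applies verbatim to any projective automorphism of $(\pdc^{d+1},\partial\Hyp^{d+1})$, not only to Heisenberg similarities; applied to a Möbius transformation $T$ sending $h$ to $\infty\in \Heis^d\cup\{\infty\}$, it reduces the problem to the model case $h = \infty$, $x = [f_0]$. For that $x$, the chains through $[f_0]$ are exactly the left cosets of $Z$, and a direct computation shows that $\iota_{[f_0]}\circ \psi_{[f_0]}\circ \Phi$ agrees, up to an affine coordinate change on the target, with the quotient map $\pi_Z: \Heis^d\to \C^d$. The inequality $\|(u,s)\|\geq \|u\|$ makes $\pi_Z$ one-Lipschitz from the Korányi metric to the Euclidean metric; combined with the local biLipschitzness of $T$ on $\Heis^d\setminus\{T^{-1}(\infty)\}$ (a standard consequence of $\mathbf{PU}(1,d+1)$ acting conformally on the Bourdon metric) and the local biLipschitzness of $f$, this yields (2). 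The main obstacle is precisely this last step: because the Korányi metric is anisotropic in the vertical direction, one cannot conclude Korányi-Lipschitzness from the manifest smoothness of $\pi_h$ in Euclidean coordinates, so the reduction via Möbius equivariance is what makes the estimate clean. A more computational alternative would be to read off local Lipschitzness from the Pansu derivative of $\pi_h$, whose computation is promised in the introduction.
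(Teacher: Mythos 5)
Your argument is correct, but note that the paper does not actually supply a proof of this lemma: part (1) is the content of the construction immediately preceding it (the paper simply asserts that $y \mapsto [\mathcal Q(x) \wedge \mathcal Q(y)]$ parametrizes the chains through $x$), part (2) is implicitly deferred to the ``straightforward (if tedious) computation'' of the Pansu derivative in Lemma \ref{lemma.pansu-derivative} (whose expansion $\pi_{h_0}(h') = \pi_{h_0}(h) + \mathrm{D}\pi_{h_0}(h)(h^{-1}h') + O(d(h,h')^2)$ with continuous derivative gives local Lipschitzness directly), and part (3) is left entirely to the reader. Your treatment of (1) coincides with the paper's construction, made precise via the duality $[\mathcal Q(x)\wedge\mathcal Q(y)] \leftrightarrow x^\dagger\cap y^\dagger \leftrightarrow \PP_\C(\C x+\C y)$; you might add the one missing check, namely that $\mathcal Q(x)\wedge\mathcal Q(y)\notin\mathcal A(x)$ for $y\neq x$, which holds because a $2$-dimensional totally isotropic subspace cannot exist for a form of signature $(d+1,1)$, so $x^\dagger\cap y^\dagger$ does not contain $x$. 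Your route to (2) — equivariance of $\mathcal Q$ under the lift $\tilde T$, reduction to the model point $[f_0]$ where the projection becomes $A\circ\pi_Z$ for an affine isomorphism $A$, then $\|\pi_Z(u,s)\|\leq\|(u,s)\|$ plus local biLipschitzness of M\"obius transformations away from the pole — is genuinely different from the paper's computational route and is cleaner, at the cost of invoking conformality of $\mathbf{PU}(1,d+1)$ on the Gromov--Bourdon metric; the paper's explicit derivative computation buys more (it is needed anyway for Lemma \ref{lemma.pansu-derivative} and the entropy argument), whereas yours isolates exactly what part (2) requires. Your equivariance computation $\mathcal Q(\tilde Tx)=\tfrac{\lambda}{\det\tilde T}(\wedge^{d+1}\tilde T)(\mathcal Q(x))$ is the right mechanism for (3), and since the induced map on $\PP_\C(\GG^d(x^\dagger))$ carries $\PP_\C(\mathcal A(x))$ to $\PP_\C(\mathcal A(\tilde Tx))$, the map $f$ read off in the affine charts is in fact affine, hence globally biLipschitz on bounded sets — slightly more than the lemma asks for.
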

Note that infinite chains are parametrized by $\pi_Z$, which corresponds to the case when $h$ is the point at infinity, \emph{i.e.} 
in $\partial \Hyp^{d+1}$ we are looking at chains passing through the point $[f_0]$.

\subsubsection{Pansu derivative of the radial projections}
Recall the definition of Pansu derivative: if $U$ is an open subset of $\Heis^d$, a mapping $f : U \to \R^k$ is differentiable 
at $h_0 \in U$ if the maps 
\[ h \mapsto \frac{f(h_0 \cdot (r \cdot h)) - f(h_0)}{r} \]
converge, as $r \to 0$, uniformly on compact subsets of $\Heis^d$, to a group homomorphism $\mathrm{D} f(h_0)$ satisfying the homogeneity condition 
\[ \mathrm{D} f(h_0) (r \cdot h) = r \mathrm{D} f(h_0) (h) \]
for all $r > 0$.

Any Lipschitz mapping $\Heis^d \to \R^k$ must be Pansu-differentiable almost everywhere. In particular this 
 holds for the projection $\pi_{h_0}$ defined in the previous paragraph.  
 In fact, a straightforward (if tedious) computation 
shows that $\pi_{h_0}$ is differentiable everywhere in $\Heis^d \setminus \{ h_0 \}$ and that the derivative $\mathrm{D} \pi_{h_0} (h)$ 
is a continuous function of $h$:

\begin{lemma}\label{lemma.pansu-derivative}
    Fix $h_0 \in \Heis^d$. For any $h \in \Heis^d \setminus \{ h_0 \}$ there is a $\C$-linear isomorphism 
    \[ M(h_0,h) : \Heis^d/Z \simeq \C^d \to \mathcal{A}(\Phi(h_0)) \]
    such that the Pansu derivative of $\pi_{h_0}$ at $h$ is given by 
    \[ \mathrm{D} \pi_{h_0} (h) = M(h_0,h) \circ \pi_Z \]
    Furthermore, 
    \begin{itemize}
        \item         for $h'$ in a neighbourhood of $h$
    \[ \pi_{h_0} (h') = \pi_{h_0}(h) + \mathrm{D} \pi_{h_0}(h) (h^{-1} h') + O(d(h,h')^2) \]
    \item 
    $M(h_0,h)$ depends continuously on $h$.
    \end{itemize}
\end{lemma}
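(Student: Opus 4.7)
The plan is to extract the Pansu derivative directly from the classical (Euclidean) differential of $\pi_{h_0}$. The projection $\pi_{h_0}$ is obtained by composing $\Phi$ (polynomial in the coordinates $(u,s)$), the $\C$-linear map $\mathcal{Q}$, the regressive product, and the affine chart $[\mathbf{u}] \mapsto \mathbf{u}/(\hat{\mathbf{x}}\cdot\mathbf{u}) - \hat{\mathbf{x}}$, all of which are polynomial or rational with non-vanishing denominators away from $h_0$. Hence $\pi_{h_0}$ is real-analytic, and in particular $C^2$, on $\Heis^d \setminus \{h_0\}$. Let $d\pi_{h_0}(h)$ denote its ordinary Euclidean differential at $h$; it depends smoothly on $h$.

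The central observation is that any Pansu derivative must be a continuous group homomorphism from $\Heis^d$ into the abelian group $(\mathcal{A}(\Phi(h_0)), +)$, and therefore must annihilate the commutator subgroup $[\Heis^d, \Heis^d] = Z$. So once Pansu differentiability is established, the derivative automatically has the form $M(h_0, h) \circ \pi_Z$ for some $\R$-linear map $M(h_0, h): \C^d \to \mathcal{A}(\Phi(h_0))$. The natural candidate for $M(h_0, h)$ is the restriction of $d\pi_{h_0}(h)$ to the horizontal distribution at $h$, identified with $\C^d$ via left translation; continuity in $h$ is then inherited from the classical differential. To verify Pansu differentiability and obtain the Taylor expansion, I would write $h' = h \cdot h_1$ with $h_1 = (v,t)$, so that $|v| \leq d(h,h')$ and $|t| \leq \tfrac12 d(h,h')^2$ by the Korányi gauge. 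A Euclidean first-order Taylor expansion of $\pi_{h_0}$ in the variables $(v,t)$ yields
\[ \pi_{h_0}(h \cdot h_1) - \pi_{h_0}(h) = M(h_0,h)(v) + t \cdot \partial_s \pi_{h_0}(h) + O(|v|^2 + t^2). \]
Since $t \cdot \partial_s \pi_{h_0}(h)$ is $O(|t|) = O(d(h,h')^2)$ and the Euclidean quadratic remainder is also $O(d(h,h')^2)$, the first-order expansion in the statement follows, and the Pansu derivative is indeed $M(h_0,h) \circ \pi_Z$.

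It remains to check that $M(h_0,h)$ is a $\C$-linear isomorphism. Injectivity follows because the fibers of $\pi_{h_0}$ are the chains passing through $h_0$, and chains are nowhere tangent to the horizontal (contact) distribution; so the kernel of $d\pi_{h_0}(h)$ meets the horizontal tangent space at $h$ trivially, and $M(h_0,h)$ is an injective $\R$-linear map between real vector spaces of the same dimension $2d$, hence an isomorphism. The $\C$-linearity is the most delicate point: it must be traced through the explicit construction of $\pi_{h_0}$, using that $\mathcal{Q}$ is $\C$-linear, the regressive product is $\C$-bilinear on exterior powers, the affine chart is a $\C$-linear-fractional map, and that the horizontal distribution on $\Heis^d$ carries the standard complex structure compatible with these holomorphic operations. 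Rigorously verifying this compatibility---and in particular keeping track of the conjugations present in the formula for $\Phi$ so that they produce $\C$-linearity rather than $\C$-antilinearity of $M(h_0,h)$---is what I expect to be the main technical obstacle.
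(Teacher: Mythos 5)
The paper does not actually prove this lemma: it is asserted to follow from ``a straightforward (if tedious) computation,'' so there is no argument of the paper's to compare yours against. Your outline supplies most of what such a computation would have to contain, and the core of it is sound. The reduction of Pansu differentiability to an ordinary first-order Taylor expansion in the coordinates $(v,t)$ of $h^{-1}h'$, using the gauge estimates $\|v\| \leq d(h,h')$ and $|t| \leq \tfrac12 d(h,h')^2$ so that the $t$-term and the Euclidean quadratic remainder are both $O(d(h,h')^2)$, is exactly right, and it yields both the expansion and the uniform-on-compacts convergence defining $\mathrm{D}\pi_{h_0}(h)$. The observation that any such derivative, being a homogeneous homomorphism into an abelian group, must kill $Z$ and hence factor as $M(h_0,h)\circ\pi_Z$, is correct. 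The injectivity argument via transversality of chains to the contact distribution is also correct in substance (at $(u,0)$ on the circle $|u|=R$ one checks $\alpha(iu,0)=\omega(u,iu)=\|u\|^2\neq 0$, and left-invariance handles general chains), though to conclude you should also note that $\pi_{h_0}$ is a submersion, so that its kernel is \emph{exactly} the tangent line to the chain and not something larger that could meet the horizontal plane.

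The genuine gap is the one you name yourself: the $\C$-linearity of $M(h_0,h)$ is asserted but not verified, and it is not automatic. The construction threads together a conjugate-linear ingredient in $\Phi$ (the entries $\overline{u_k}$) and another in $\mathcal Q$ (which is conjugate-linear in its argument since $\langle x,y\rangle$ is conjugate-linear in $x$), and one has to check that these conjugations cancel so that the horizontal differential is complex-linear rather than antilinear or merely real-linear; saying this ``must be traced through the explicit construction'' is a statement of the problem, not a proof. Two mitigating remarks: first, this is precisely the ``tedious computation'' the paper itself declines to write down, so you have isolated the right residual difficulty; second, nothing downstream actually uses $\C$-linearity --- the entropy comparison in Lemma~\ref{lemma.cp-th} only needs $M(h_0,h)$ to be a real-linear isomorphism depending continuously on $h$ (hence uniformly bi-Lipschitz for $h$ in the compact tile $T$), and that much your argument does establish. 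So your proposal proves a slightly weaker statement than the lemma as written, but one that suffices for the rest of the paper.
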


Note that the fact that $\mathrm{D} \pi_{h_0}$ is essentially the quotient mapping $\pi_Z$ is not surprising: any group homomorphism from 
$\Heis^d$ onto $\C^d$ must be of the form $A \circ \pi_Z$ for some isomorphism $A$. The point is that $A$ here is continuous in $h$, which 
is very intuitive.

\subsection{Stricharz cubes}\label{ss.cubes}
One of the main technical tools used in \cite{Hochman2012} (and which we use again in this paper) is a \emph{local entropy averages} formula 
that relies crucially on the existence of good partitions of Euclidean spaces (\emph{i.e.} dyadic cubes and variants thereof).

The dyadic partitions coming from the identification of $\Heis^d$ with $\C^d\times \R$ are of no use in our situation because these partitions 
are not translation invariant:  the image $h \cdot Q$ of a Euclidean cube through a Heisenberg translation is not a cube, because $h$ 
``tilts'' $Q$ in the vertical direction. 

Fortunately, a good substitute for dyadic partitions in $\Heis^d$ has been introduced by Strichartz. An odd integer $b \geq 2d+1$ 
being fixed, there exists a compact subset $T$ of $\Heis^d$, the ``unit Strichartz cube'' (or ``tile'') satisfying the following proposition.

\begin{proposition}[\cite{Tyson2008}]
    \begin{enumerate}
        \item The origin $0$ of $\Heis^d$ is an interior point of $T$.
    \item The closure of the interior of $T$ is equal to $T$.
    \item $T$ is a fundamental domain for the operation of 
    \[ \Gamma = \{ (u,s) \in \Heis^d\ ;\ u \in \mathbf{Z}^d \oplus i \mathbf{Z}^d,\ s \in \mathbf{Z} \} \]
     on $\Heis^d$:
    \[ \Heis^d = \Gamma \cdot T = \bigcup_{\gamma \in \Gamma} \gamma \cdot T \]
    and for $\gamma,\gamma' \in \Gamma$ distinct, $\gamma T$ and $\gamma' T$ have disjoint interiors.
    \item The boundary $\partial T$ is Lebesgue-negligible.
    \item The blow-up $T_{-1} = b^{-1} \cdot T$ is a (finite) union of translates of $T$.
\end{enumerate}
\end{proposition}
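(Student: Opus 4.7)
The plan is to construct $T$ as the attractor of a carefully chosen iterated function system in $\Heis^d$, adapting the self-affine tile paradigm of Lagarias--Wang to the Heisenberg setting. The starting observation is that the Heisenberg dilation $\delta_b : h \mapsto b \cdot h$ is a group automorphism of $\Heis^d$ mapping $\Gamma$ into itself. Since $\delta_b$ acts as multiplication by $b$ on the $\C^d$-factor and by $b^2$ on the center $Z$, the index $[\Gamma : \delta_b(\Gamma)]$ equals $b^{2d}\cdot b^2 = b^{2d+2}$, which also agrees with the Jacobian of $\delta_b$ with respect to $\lambda$. One then picks a system $D \subset \Gamma$ of $b^{2d+2}$ coset representatives for $\Gamma/\delta_b(\Gamma)$, containing $0$ and arranged as symmetrically around the origin as possible; the hypothesis that $b$ is odd with $b \geq 2d+1$ is precisely what allows such a well-distributed choice.

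Next, define contractions $f_d(h) = d \cdot (b^{-1} \cdot h)$ for $d \in D$, each of \Kor ratio $1/b$, and let $T$ be the unique non-empty compact attractor of the IFS $\{f_d\}_{d \in D}$. By construction,
\[ T = \bigcup_{d \in D} d \cdot (b^{-1} \cdot T), \]
which, after applying $\delta_b$ and using that it is a group automorphism, rearranges to the equivalent identity $b \cdot T = \bigcup_{d \in D} (b \cdot d) \cdot T$. This is a reformulation of property~(5).

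The core of the argument is the tiling property~(3). For the covering $\Heis^d = \Gamma \cdot T$, one extracts a digit expansion of an arbitrary $h \in \Heis^d$ using the modular structure of $D$: peel off a digit $d_1 \in D$ so that, after a lattice correction, $d_1^{-1} \cdot h$ lies in $\delta_b(\Heis^d)$, iterate on $b^{-1} \cdot d_1^{-1} \cdot h$, and pass to a limit inside the attractor. For the non-overlap of distinct translates, the Jacobian identity forces $\lambda(T) \leq 1$, and combining with the covering and the covolume identity $\lambda(\Heis^d/\Gamma) = 1$ gives equality, hence pairwise disjoint interiors of the $\gamma \cdot T$. Properties~(2) and~(4) then follow from standard self-affine tile arguments: $\partial T$ is covered by a countable union of strict contractions of itself and so has $\lambda$-measure zero, and the self-similarity together with the tiling identity forces $T = \overline{\mathrm{int}(T)}$. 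Property~(1) is obtained by iterating $f_0$ at the origin, using the spread of $D$ to ensure that $0$ lies in the interior.

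The main obstacle is arranging the tiling property without self-overlap, which is precisely where the hypothesis $b$ odd with $b \geq 2d+1$ enters. One needs a digit set $D$ that is simultaneously spread enough that $T$ does not overlap its lattice translates and concentrated enough near the origin that the IFS stays within a controlled \Kor ball. Tracking how the noncommutativity of the Heisenberg law propagates through iterated group compositions, and affects which distinct words in $D$ can produce the same point in the attractor, is the technical core of Strichartz's construction in \cite{Tyson2008}, and is substantially more delicate than in the abelian case.
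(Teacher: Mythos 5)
First, a point of comparison: the paper does not prove this proposition at all --- it is imported wholesale from \cite{Tyson2008} --- so there is no internal argument to measure your sketch against. Your outline does identify the right construction: $T$ is the attractor of the IFS $\{h\mapsto d\cdot(b^{-1}\cdot h)\}_{d\in D}$ with $D\subset\Gamma$ a set of $b^{2d+2}$ coset representatives of $\Gamma/\delta_b(\Gamma)$, where $\delta_b$ is the dilation automorphism, and your rearrangement $b\cdot T=\bigcup_{d\in D}(b\cdot d)\cdot T$ is the correct form of property (5). (Incidentally, your computation shows that the ``blow-up'' in the statement should be $b\cdot T$, not $b^{-1}\cdot T$; the latter is a contraction of $T$ and cannot be a finite union of its translates, so the statement as printed carries a typo which your derivation silently corrects.)

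The genuine gap is in your treatment of the tiling property (3), which you yourself identify as the core. The assertion that ``the Jacobian identity forces $\lambda(T)\leq 1$'' is not an argument: since $|D|=b^{2d+2}$ and each $f_d$ scales $\lambda$ by exactly $b^{-(2d+2)}$, the subdivision identity only yields $\lambda(T)\leq |D|\,b^{-(2d+2)}\lambda(T)=\lambda(T)$, which is vacuous and equally consistent with heavily overlapping pieces and $\lambda(T)$ arbitrarily large. Similarly, ``$d_1^{-1}\cdot h$ lies in $\delta_b(\Heis^d)$'' is an empty condition, since $\delta_b$ is surjective; what is needed is a radix expansion of the \emph{lattice} $\Gamma$ in base $\delta_b$ with digits in $D$, and the termination of that expansion is itself not automatic. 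The actual non-overlap proof (Lagarias--Wang in the abelian case, Strichartz here) studies the $\Gamma$-periodic multiplicity function $f(x)=\#\{\gamma\in\Gamma:\ x\in\gamma\cdot T\}$, uses the subdivision identity to show $f$ is a.e.\ constant, and then must rule out that constant being $\geq 2$ (equivalently, that $T$ tiles only with respect to a proper sublattice). It is exactly this last step that consumes the hypotheses that $b$ is odd and $b\geq 2d+1$ together with the symmetric choice of digits: the commutator term $\omega(u,v)$ in the group law shifts the vertical components of iterated digit words and can produce coincidences between distinct words that have no analogue over $\mathbf{Z}^n$. Your sketch names this difficulty but does not engage with it; the same applies to property (1), since the fixed point of $f_0$ certainly lies in $T$ but need not a priori be an \emph{interior} point. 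As written, the proposal establishes the subdivision identity (5) and defers the covering, non-overlap, interior and boundary statements back to the very citation it is meant to replace.
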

The last property, which comes from the fact that $T$ is in fact the limit set of a self-similar IFS in $\Heis^d$, is of course 
crucial as we want to have a nested family of partitions that is both scale-invariant and translation-invariant.

We let $\mathcal Q_0$ be the partition of $\Heis^d$ with atoms all the translates $\gamma \cdot T$, $\gamma \in \Gamma$. For any $m \geq 2$, 
let also $\mathcal Q_m$ be the partition of $\Heis^d$ that is the image of $\mathcal Q_0$ through the Heisenberg dilation of ratio $b^{-m}$, 
so that $\mathcal Q_{m+1}$ refines $\mathcal Q_m$ for every $m \geq 1$: every atom of $\mathcal Q_{m+1}$ is contained in a unique atom of 
$\mathcal Q_m$. Also, any atom $Q \in \mathcal Q_m$ is comparable to a ball of radius $b^{-m}$; more precisely, 
there is a unique element $\gamma \in b^{-m} \cdot \Gamma$ such that 
\[ B(\gamma \cdot 0, C^{-1} b^{-m}) \subset Q \subset B(\gamma \cdot 0, C b^{-m}) \]
where $C>1$ is some uniform constant.

As usual, the $\mathcal Q_m$-atom containing a point $x \in \Heis^d$ will be denoted by $\mathcal Q_m(x)$.

\section{Local entropy averages}\label{s.lea}
\subsection{Definition}

We define CP-distribution in Heisenberg groups following Hochman \cite{Hochman2010}, replacing only Euclidean dyadic cubes with 
Strichartz cubes, and Euclidean dilations with Heisenberg dilations.

As in \ref{ss.cubes}, fix an odd integer $b \geq 2d+1$ and consider the corresponding Strichartz cube $T$ as well as family of nested partitions
 $\mathcal Q_m$, $m \geq 1$. If $\mu$ is a Radon measure such that $\mu(T)>0$, we let 
 \[ \mu^* = \frac{\mu}{\mu(T)}\quad ; \quad \mu^\square = \mu_T \]
 where as usual $\mu_T$ is the probability measure proportional to the restriction $\mu|T$. The corresponding spaces of Radon measures 
 are denoted by $\mathcal M^*$ and $\mathcal M^\square$.

 For any $Q \in \mathcal{Q}_m$, we let $T_Q$ be the unique affine Heisenberg dilation mapping $\overline{Q}$ onto $T$; 
 this is the composition of a Heisenberg dilation of ratio $b^m$ and a Heisenberg translation. 
 For any $\mu \in \mathcal M^\square$ and any $Q \in \mathcal Q_m$ 
 such that $\mu(Q) > 0$, we denote by $\mu_Q$ the conditional measure 
 \[ \mu_Q = \frac{\mu|Q}{\mu(Q)} \]
 and by $\mu^Q$ the push-forward of $\mu_Q$ through $T_Q$. By definition, $\mu^Q$ belongs to $\mathcal M^\square$. 

For any $(\mu,x) \in \mathcal M^\square \times T$ such that $\mu(\mathcal Q_1(x))>0$, let 
\[ M(\mu,x) = (\mu^{\mathcal Q_1(x)}, T_{\mathcal Q_1(x)}x) \in \mathcal M^\square \times T \]

\begin{definition}[\cite{Hochman2012}, \cite{Hochman2010}]
    A Borel probability measure $P$ on $\mathcal M^\square \times T$ is a CP-distribution if 
    \begin{enumerate}
        \item $P$ gives full measure to the Borel set of $(\mu,x)$ such that $\mu(\mathcal Q_1(x))>0$;
        \item $P$ is $M$-invariant;
        \item $P$ possesses the following adaptedness property:
        \[ P =\int \mathrm{d}P (\mu,x)\ \mathrm{Dirac}(\mu) \otimes \mu \]
    \end{enumerate}
\end{definition}
Note that whenever we talk of a CP-distribution, it is implied that an odd integer $b \geq 2d+1$ has been fixed once and for all. 

We say that a CP-distribution $P$ is ergodic if it is ergodic with respect to $M$.

An extended CP-distribution is a Borel probability measure $P$ on $\mathcal M^* \times T$ whose push-forward through the mapping 
$(\mu,x) \mapsto (\mu^\square,x)$ is a CP-distribution.

\subsection{Dimension of projections}

\begin{lemma}\label{lemma.cp-th}
    Let $P$ be an ergodic CP-distribution on $\Heis^d$.    
    For $P$-almost every $\mu$, the following holds:
    for every $h \in \Heis^d$ outside of the unit Strichartz cube $T$,
    \[ \dim(\pi_h \mu) \geq \int \mathrm{d} P(\nu) \dim(\pi_Z \nu) \]
\end{lemma}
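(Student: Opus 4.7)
The plan is to adapt the local entropy averages argument of Hochman and Shmerkin (Theorem~4.4 and Theorem~8.1 of \cite{Hochman2012}) to the Heisenberg setting: Euclidean dyadic cubes are replaced by the Strichartz cubes $\mathcal Q_m$ constructed in section~\ref{ss.cubes}, and the ordinary derivative of a smooth projection is replaced by the Pansu derivative supplied by Lemma~\ref{lemma.pansu-derivative}. The overall shape is a multi-scale entropy bound for $\pi_h\mu$ whose terms are identified, via Birkhoff's theorem applied to the CP-dynamics $M$, with the expected entropy of $\pi_Z$ under $P$.

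Fix $h \notin T$, so that $\pi_h$ is Pansu-differentiable on a neighbourhood of $T$. For $x \in T$ and $h' \in \mathcal Q_m(x)$ we have $d(x,h') \leq C b^{-m}$, and Lemma~\ref{lemma.pansu-derivative} gives
\[ \pi_h(h') = \pi_h(x) + M(h,x)\circ \pi_Z(x^{-1}h') + O(b^{-2m}). \]
The linear part has size $\asymp b^{-m}$ uniformly in $x$, since $M(h,\cdot)$ is continuous and everywhere invertible on the compact set $T$, so the quadratic remainder is negligible in comparison. Consequently, up to an affine motion and a perturbation that contributes only an $o(1)$ to the normalized entropy at scale $b^{-m}$, the pushforward $\pi_h(\mu_{\mathcal Q_m(x)})$ is a rescaled copy of $\pi_Z(\mu^{\mathcal Q_m(x)})$.

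The multi-scale telescoping of Hochman and Shmerkin then yields, for any fixed resolution $k$,
\[ \frac{1}{kn\log b}\, H\bigl(\pi_h \mu, \mathcal D_{kn}\bigr) \geq \frac{1}{n}\sum_{j=0}^{n-1} \frac{1}{k\log b} \int H\bigl(\pi_Z \mu^{\mathcal Q_{kj}(x)}, \mathcal D_k\bigr)\, d\mu(x) - o_{k,h}(1), \]
where $\mathcal D_\ell$ denotes a dyadic partition of $\C^d$ at scale $b^{-\ell}$ and the error $o_{k,h}(1)$ tends to $0$ as $n \to \infty$, uniformly in $h$ on compact subsets of $\Heis^d\setminus T$. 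By adaptedness of $P$, $\mu^{\mathcal Q_{kj}(x)}$ is the first coordinate of $M^{kj}(\mu,x)$, so the integrand is $\phi\circ M^{kj}$ with $\phi(\nu,y)=\tfrac{1}{k\log b}H(\pi_Z\nu, \mathcal D_k)$ depending only on $\nu$. Birkhoff's theorem identifies the time average, for $P$-a.e.\ $(\mu,x)$, with $\int\phi\, dP = \int dP(\nu)\,\tfrac{1}{k\log b}H(\pi_Z\nu, \mathcal D_k)$. Letting $n\to\infty$ and then $k\to\infty$, and invoking the fact that typical measures under a CP-distribution are exact-dimensional with entropy dimension equal to Hausdorff dimension (so that $\tfrac{1}{k\log b}H(\pi_Z\nu, \mathcal D_k)\to \dim(\pi_Z\nu)$), one concludes $\dim(\pi_h\mu)\geq \int dP(\nu)\dim(\pi_Z\nu)$.

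The main technical obstacle is to sum the Pansu errors uniformly over all scales $0\leq j<n$ and all $h$ in a compact subset of $\Heis^d\setminus T$: the quadratic remainder above, the oscillation of $x\mapsto M(h,x)$ on a single Strichartz cube at scale $b^{-kj}$, and the geometric mismatch between Strichartz cubes in $\Heis^d$ and the dyadic cubes $\mathcal D_\ell$ in the image. The continuity and uniform invertibility of $M(h,\cdot)$ provided by Lemma~\ref{lemma.pansu-derivative}, together with the self-similar scaling of the Strichartz cubes, make each error a genuine $o(1)$; fitting them into the multi-scale entropy comparison while preserving uniformity in $h$ is the delicate point. Since the right-hand side of the final inequality does not depend on $h$, this uniformity combined with an exhaustion of $\Heis^d\setminus T$ by compact sets produces the stated bound simultaneously for every $h\notin T$.
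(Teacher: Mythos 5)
Your proposal follows essentially the same route as the paper: linearize $\pi_h$ on each Strichartz cube via the Pansu derivative of Lemma \ref{lemma.pansu-derivative}, use that this derivative is $M(h_0,x)\circ\pi_Z$ with $M(h_0,\cdot)$ continuous and invertible on the compact cube to replace the entropy of $\pi_h\nu_{\mathcal Q_{nq}(x)}$ at scale $b^{-q(n+1)}$ by that of $\pi_Z\nu^{\mathcal Q_{nq}(x)}$ at scale $b^{-q}$ up to $O(1)$, feed this into the Hochman--Shmerkin multiscale entropy inequality, and close with the ergodic theorem. Two points need repair. First, the Birkhoff step as you state it is not literally correct: the time average $\frac1n\sum_{j<n}\phi\bigl(M^{kj}(\mu,x)\bigr)$ is an ergodic average for the map $M^{k}$, and ergodicity of $M$ does not imply ergodicity of $M^{k}$, so this average need not converge to $\int\phi\,\mathrm{d}P$. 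The paper's fix is to apply the ergodic decomposition of $P$ with respect to $M^{q}$ and note that, since the averages over the offsets $i=0,\dots,q-1$ sum to the $M$-Birkhoff average, for a.e.\ $(\mu,x)$ some offset $i$ achieves at least $\int e_q\,\mathrm{d}P$; this is precisely why the paper first proves the bound for the conditional measure $\mu_Q$ with $Q=\mathcal Q_i(x)$ (using the similarity equivariance of $\pi_h$ to transport $h$ by $T_Q^{-1}$, which keeps it outside a neighbourhood of $T$) and only then reassembles the bound for $\mu$ from countably many such cubes. Your sketch skips this layer entirely. Second, in the final limit you invoke exact dimensionality of $\pi_Z\nu$ for $P$-typical $\nu$; this is neither needed nor obviously available for projected measures. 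The inequality only requires the one-sided, universally valid fact that lower entropy dimension dominates Hausdorff dimension, applied together with Fatou's lemma as $q\to\infty$, which is what the paper does.
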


\begin{proof}
     We are going to show that for  $P$-almost every $(\mu,x)$
    there is a Strichartz cube $Q \in \mathcal Q_m$, $m \geq 1$, containing $x$ and of positive $\mu$-measure, such that 
    \[ \dim (\pi_h \mu_Q) \geq  \int \mathrm{d} P(\nu) \dim(\pi_Z \nu) 
    \]
    for all $h \notin T$. From this the conclusion follows.
    
    We denote by $H_\rho (\nu)$ the $\rho$-scale entropy of a Borel probability measure $\nu$:
    \[ H_\rho (\nu) = \int \mathrm{d} \nu(x) \log \nu(B(x,\rho)) \]

    Let us first argue as in \cite{Hochman2012}; fix $\nu$, a probability measure supported on the unit Strichartz cube $T$, and fix 
    a neighbourhood $U$ of $T$. I claim that for all $h \notin U$, 
    \begin{equation} \label{eq.proof} \dim (\pi_h \nu) \geq \frac{1}{q \log b} \underset{x \sim \nu}{\mathrm{essinf}} \liminf_{N \to \infty} 
        \sum_{n=0}^{N-1} H_{b^{-q}}
    (\pi_Z \nu^{\mathcal Q_{nq}(x)}) - O(q^{-1}) \end{equation}
    for all $q \geq 1$, where the big-O constant is uniform in $h$. Indeed, as in the proof of Theorem 8.1 in \cite{Hochman2012}, we have first 
    \[ \dim (\pi_h \nu) \geq \frac{1}{q \log b} \underset{x \sim \nu}{\mathrm{essinf}} \liminf_{N \to \infty} 
\sum_{n=0}^{N-1} H_{b^{-q(n+1)}} (\pi_h \nu_{\mathcal{Q}_{nq}(x)}) - O(q^{-1}) \]
for any $q \geq 1$, for all $h$ in the complement of $U$. Now, to show that $H_{b^{-q(n+1)}} (\pi_h \nu_{\mathcal{Q}_{nq}(x)})$ is comparable 
to $H_{b^{-q}}(\pi_Z \nu^{\mathcal Q_{nq}(x)})$ (pay attention to the different mappings $\pi_h$ and $\pi_Z$), recall that $\pi_h$ satisfies 
\[ \pi_h(y) = \pi_h(x) + \mathrm{D} \pi_h (x) (x^{-1} y) + O(d(x,y)^2) \]
so that 
\[ H_{b^{-q(n+1)}} (\pi_h \nu_{\mathcal Q_{nq}(x)}) = H_{b^{-q(n+1)}} \left(\mathrm{D} \pi_h(x) \circ L_{x^{-1}} (\nu_{\mathcal Q_{nq}(x)}) \right) 
+O(1) \]
where $L_{x^{-1}}$ is the left multiplication by $x^{-1}$ in $\Heis^d$. 

The Pansu derivative $\mathrm{D} \pi(x)_h$ is a morphism of Carnot groups; hence, 
\[ H_{b^{-q(n+1)}} \left(\mathrm{D} \pi_h(x) \circ L_{x^{-1}} (\nu_{\mathcal Q_{nq}(x)}) \right) =
 H_{b^{-q}} \left( \mathrm{D} \pi_h (x)  \left(\nu^{\mathcal Q_{nq} (x)}\right) \right) \]

Finally, by Lemma \ref{lemma.pansu-derivative}
\[ H_{b^{-q}} (\mathrm{D} \pi_h(x) (\nu^{\mathcal Q_{nq} (x)})) = H_{b^{-q}} (\pi_Z \nu^{\mathcal Q_{nq} (x)}) + O(1) \]
as $x$ stays in the (compact) Strichartz cube $T$. Thus we have obtained \eqref{eq.proof}.

Now we are ready to work with $P$. For any $q \geq 1$, let $e_q$ be the Borel mapping $\mathcal M^\square \times T \to \R$
 \[ e_q(\nu,x) = H_{b^{-q}} (\pi_Z \nu) \]
and let $\mathbf{X}_q$ be the set of all $(\nu,x) \in \mathcal M^\square \times T$ such that for some integer $i$, $0 \leq i \leq q-1$,
\[ \liminf_{N \to \infty} \frac{1}{N} \sum_{n=0}^{N-1} e_q(M^{nq+i}(\nu,x)) \geq \int e_q\ \mathrm{d}P  \]
It follows from the ergodicity of $P$ (w.r.t $M$), and the ergodic decomposition theorem applied to $P$ w.r.t. $M^q$, that $P(\mathbf{X}_q)=1$ for all $q$, and so the intersection $\mathbf{X} = \cap_q \mathbf{X}_q$
also has full measure. Also, adaptedness of $P$ implies that for $P$-almost every $\mu$, $(\mu,x)$ belongs to $\mathbf{X}$ for $\mu$-almost every $x$.

Pick such a $\mu$. By definition of the transformation $M$, for all $q \geq 1$ there is $0 \leq i \leq q-1$ such that 
\[ \liminf_{N \to \infty} \frac{1}{N} \sum_{n=0}^{N-1} H_{b^{-q}} ((\mu^Q)^{\mathcal Q_{nq}(x)}) \geq \int e_q\ \mathrm{d}P\]
for $\mu^Q$-almost every $x$, where we let $Q=\mathcal Q_i(x)$. By \eqref{eq.proof} this implies 
\[ \dim (\pi_h \mu^Q) \geq \int e_q\ \mathrm{d}P - O(q^{-1}) \]
for all $h \notin U$. Also,  $\dim (\pi_h \mu^Q) = \dim (\pi_{h'} \mu_Q)$ where $h'=T_Q^{-1}(h)$ by Lemma \ref{lemma.pansu-derivative};
 thus we have obtained 
\[ \dim (\pi_h \mu_Q) \geq \int e_q \ \mathrm{d} P - O(q^{-1}) \]
for all $h \notin T_Q^{-1}(U)$; in particular also for all $h \notin U$ (note that $T_Q U \subset U$).

To conclude, observe that for all $q \geq 1$,  $\mu$-almost every $x$ belongs to a Strichartz cube $Q$ such that the above inequality holds
for all $h$. We deduce that the same inequality holds for $\dim (\pi_h \mu)$; the conclusion of the Theorem then follows, for $h \notin U$,
from letting $q \to \infty$, applying Fatou lemma, and the standard fact that lower entropy dimension is always greater than 
Hausdorff dimension, see \cite{Fan2002}; by shrinking the neighbourhood $U$ of $T$ we have the result for all $h \notin T$.

\end{proof}

\begin{theorem}\label{th.cp-th}
    Let $P$ be an ergodic CP-distribution such that $P$-almost every $\mu$ is atomless. Then for $P$-almost every $\mu$, the following holds:
    for every $h \in \Heis^d$,
\begin{equation}\label{eq.hs} \dim(\pi_h \mu) \geq \int \mathrm{d} P(\nu) \dim(\pi_Z \nu) \end{equation}
\end{theorem}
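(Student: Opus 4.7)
The plan is to reduce the case $h \in T$ (not covered by Lemma \ref{lemma.cp-th}) to the case $h \notin T$ via a one-step renormalization, exploiting the $M$-invariance of $P$ and the atomlessness of $\mu$. Fix a $P$-typical atomless $\mu$ and $h \in T$.

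First I would upgrade the conclusion of Lemma \ref{lemma.cp-th} to a hereditary form: for $P$-almost every $\mu$ and for $\mu$-almost every $x$, every renormalized measure $\mu^{\mathcal{Q}_m(x)}$ ($m \geq 1$) again belongs to the $P$-full measure set on which Lemma \ref{lemma.cp-th} holds. This follows from $M$-invariance together with the adaptedness identity $P = \int \mathrm{d}P(\mu, x)\, \delta_\mu \otimes \mu$, by unrolling the identity $P(\{(\mu,x) : \mu^{\mathcal{Q}_1(x)} \in A\}) = P_1(A)$ (with $P_1$ the first marginal of $P$) and iterating with $M^m$.

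Second, since $\mu$ is atomless, the mass $a_m := \mu(\mathcal{Q}_m(h))$ tends to $0$. Pick $m$ large enough that $a_m < 1$ and select a point $x \notin \mathcal{Q}_m(h)$ in the hereditary-typicality set just constructed; set $Q := \mathcal{Q}_m(x)$, so $\mu(Q) > 0$, $\mu^Q$ is $P$-typical, and $h \notin Q$. Consequently the image $h' := T_Q(h)$ lies outside $T$. The rescaling $T_Q$ is a Heisenberg similarity, so item 3 of the Lemma on $\pi_h$ provides a locally bi-Lipschitz map $f$ with $\pi_h = f \circ \pi_{h'} \circ T_Q$. Pushing forward $\mu_Q$ gives $\pi_h \mu_Q = f_\ast(\pi_{h'} \mu^Q)$, hence $\dim(\pi_h \mu_Q) = \dim(\pi_{h'} \mu^Q)$. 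Applying Lemma \ref{lemma.cp-th} to $\mu^Q$ at the outside point $h'$ then yields $\dim(\pi_h \mu_Q) \geq \int \mathrm{d}P(\nu) \dim(\pi_Z \nu)$.

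Finally, the bound transfers from $\mu_Q$ to $\mu$ because $\pi_h \mu \geq \mu(Q)\, \pi_h \mu_Q$ as Radon measures, which makes $\pi_h \mu_Q$ absolutely continuous with respect to $\pi_h \mu$ with bounded Radon--Nikodym derivative; Hausdorff dimension of a measure (in the sense of the minimal dimension of a full-measure set) is monotone in this direction, so $\dim(\pi_h \mu) \geq \dim(\pi_h \mu_Q)$. The main obstacle I anticipate is precisely the bookkeeping of the first step, namely verifying that the $P$-full measure set witnessing Lemma \ref{lemma.cp-th} is stable under the zoom-in operation $\mu \mapsto \mu^Q$; this is exactly what $M$-invariance delivers and is the conceptual heart of the argument. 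Beyond this, no new local-entropy-averages computation is required.
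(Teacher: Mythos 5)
Your first two steps (hereditary typicality of the blow-ups $\mu^Q$ via $M$-invariance and adaptedness, then rescaling a cube $Q$ not containing $h$ so that $h$ is sent outside $T$ and Lemma \ref{lemma.cp-th} applies) are exactly the paper's strategy. The gap is in the final transfer step. The dimension being bounded here is the lower Hausdorff dimension $\dim \nu = \inf\{\dim A : \nu(A) > 0\}$ --- this is the quantity that the local entropy averages control in the proof of Lemma \ref{lemma.cp-th} --- and for this notion the monotonicity you invoke goes the wrong way: if $\pi_h \mu_Q \ll \pi_h \mu$ then $\dim(\pi_h \mu_Q) \geq \dim(\pi_h \mu)$, not the reverse. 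Restricting a measure can only raise its lower dimension; a set $A$ of small dimension with $\pi_h\mu(A) > 0$ may be charged entirely by the part of $\mu$ living outside your single cube $Q$, about which your argument says nothing. (Your parenthetical definition, ``minimal dimension of a full-measure set'', is the \emph{upper} Hausdorff dimension, for which your inequality is correct but which is not what the entropy averages bound from below; a lower bound on it would be a strictly weaker conclusion than the one intended.) As a purely measure-theoretic matter, a measure that is half a nice measure on $Q$ and half a measure carried by a single chain through $h$ outside $Q$ satisfies $\pi_h \mu \geq \mu(Q)\, \pi_h \mu_Q$ yet has $\dim(\pi_h\mu) = 0$, so the step cannot be valid as stated.

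The repair is what the paper does: at each level $m$ you must control \emph{all} cubes at once, not one. Decompose $\mu = \mu|\mathcal{Q}_m(h) + \sum_Q \mu|Q$ over the cubes $Q \in \mathcal{Q}_m$ of positive measure not containing $h$; your steps 1--2 give $\dim(\pi_h(\mu|Q)) \geq \alpha$ for each such $Q$, and since the lower Hausdorff dimension of a countable sum of measures is the infimum of the dimensions of the summands, $\dim\bigl(\pi_h \sum_Q \mu|Q\bigr) \geq \alpha$. Atomlessness is then used more strongly than you use it: not merely to find one cube of positive mass avoiding $h$, but to force $\mu(\mathcal{Q}_m(h)) \to \mu(\{h\}) = 0$ as $m \to \infty$, so that any $A$ with $\pi_h\mu(A) > 0$ eventually receives positive mass from $\pi_h \sum_Q \mu|Q$ and hence satisfies $\dim A \geq \alpha$.
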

\begin{proof}
    Let $\alpha$ be the right-hand side. By virtue of the Lemma (and using the $M$-invariance of $P$), we know that  a $P$-typical $\mu$ satisfies the following:
    for every $m \geq 1$, and every $Q \in \mathcal Q_m$ such that $\mu(Q)>0$,
    \[ \dim (\pi_h \mu^Q) \geq \alpha \]

    Now fix a $P$-typical $\mu$ and let $h$ belong to $T$. For any $m \geq 1$, we can write 
    \[ \mu = \mu_{\mathcal Q_m(h)} + \sum_{Q} \mu_Q \]
    where in the sum $Q$ goes through all atoms of $\mathcal Q_m$ that have positive $\mu$-measure and do not contain $h$. For 
    such a $Q$, we have 
    \[ \dim (\pi_h \mu_Q) = \dim (\pi_{h'} \mu^Q) \geq \alpha \]
    where $h' = T_Q^{-1}h$ does not belong to $T$. It follows that the push-forward of $\sum_Q \mu_Q$ through $\pi_h$ also has dimension $\geq \alpha$.

    It follows easily that $\dim (\pi_h \mu)$ must be $\geq \alpha$, using the fact that $\mu(\{h\})=0$ by our assumption.
\end{proof}

It is perhaps useful to highlight some of the differences between Theorem \ref{th.cp-th} here and Theorem 8.1 in \cite{Hochman2012}. First note 
that the right-hand side in \eqref{eq.hs} does not depend on $h$. The point is that at very small scale, all the projections $\pi_h$ look 
like $\pi_Z$ (and uniformly so). This is very different from the Euclidean situation, where we look at linear projections which already behave well 
with respect to Euclidean dilations. For this reason, it does not seem to be possible to obtain an analogue of Theorem 8.2 from \cite{Hochman2012}.

A more accurate analogue of Theorem \ref{th.cp-th} in the Euclidean case would deal with projections along spheres at a given angle. 
More precisely, for any point $x \in \R^d$ and any angle $\theta \in \mathbf{P}(\R^d)$, there is a foliation of $\R^d \setminus \{x\}$
whose leaves are the Euclidean circles passing through $x$ and tangent to $\theta$ at $x$. If we look at the derivative of the 
corresponding mapping $\R^d \setminus \{ x\} \to \R^{d-1}$, we get something which resembles the mapping onto the space orthogonal to $\theta$.

Applying the argument of Hochman-Shmerkin to this situation, we would obtain the inequality 
\[ \dim (\pi_{x,\theta} (\mu)) \geq \int \mathrm{d}P(\nu) \dim(\pi_\theta \nu) \]

As mentioned in the introduction to this paper, Theorem \ref{th.cp-th} falls somewhat short of being as useful as Theorem 8.1 from 
\cite{Hochman2012}. We cannot  obtain semi-continuity for the dimension of projections, which is key in applying 
Marstrand Theorem (of which a version for the radial projections we consider was proved in \cite{Dufloux2017a}) to obtain a 
big open set of projections where the dimension is large.

This is why the only applications we can provide (in section \ref{s.applications}) deal with situations where we already 
understand the dimension of $\pi_Z \mu$, and Theorem \ref{th.cp-th} yields a corresponding bound on $\pi_h \mu$ for every $h$.

\subsection{Fractal distributions and centering of CP-distributions}
We define Fractal Distributions in Heisenberg groups following the definition given in \cite{Hochman2010} for Euclidean spaces.
For any $x \in \Heis^d$, let $T_x$ be the (left) Heisenberg translation that maps $x$ to the origin; for any $t \in \R$, let 
$S_t$ be the Heisenberg homothety of ratio $e^t$, 
\[ x =(u,s) \mapsto e^t \cdot x = (e^t u, e^{2t} s)\]

If $\mu$ is a Borel measure whose support contains $x$, we let $T_x^* \mu$ be the measure $(T_x \mu)^*$.

\begin{definition}[\cite{Hochman2010}]
    A fractal distribution is a Borel probability measure $P$ on $\mathcal M^*$ that satisfies the following conditions.

    \begin{enumerate}
\item  Given any relatively compact neighbourhood $U$ of the origin in $\Heis^d$, the distribution 
\[ \int \mathrm{d}P(\mu) \int_U \mathrm{d}\mu(x)\ \mathrm{Dirac}(T_x^* \mu) \]
is equivalent to $P$ ($P$ is ``quasi-Palm'').
\item For any $t \in \R$, $S_t^* P=P$.
    \end{enumerate}
\end{definition}

Note that if $P$ is ergodic, $P$-almost every $\mu$ is exact dimensional of dimension 
\[ \dim P = \int \mathrm{d}P (\nu) \dim(\nu) = \int \mathrm{d} P (\nu) \frac{\log B(0,r)}{\log r} \]
for any $0 < r < 1$.

\begin{theorem}\label{th.proj-efd}
    Let $P$ be a Fractal Distribution in $\Heis^d$. We assume that $P$ is $S_{\log b}^*$-ergodic for some $b \geq 2d+1$ and that $\dim (P) > 0$.
     Then for $P^\square$-almost every $\mu$, 
    \[ \dim (\pi_h \mu) \geq \int \mathrm{d} P^\square(\nu) \dim (\pi_Z \nu) \]
    for all $h \in \Heis^d$.
\end{theorem}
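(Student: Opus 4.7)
The plan is to reduce Theorem \ref{th.proj-efd} to the CP-theorem \ref{th.cp-th} via Hochman's centering procedure from \cite{Hochman2010}, translated into our Heisenberg/Strichartz setting. The key idea is that the continuous zoom-flow governing a fractal distribution admits a discrete shadow on the Strichartz tiling, and that shadow is precisely a CP-distribution.

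Concretely, I would form the candidate
\[ \widetilde{P} = \int \mathrm{d}P^\square(\mu)\ \delta_\mu \otimes \mu \]
on $\mathcal M^\square \times T$ and verify that $\widetilde{P}$ is an ergodic CP-distribution. Adaptedness is immediate from the definition, and since $\mathcal Q_1$ is a partition and $x$ is $\mu$-random, $\mu(\mathcal Q_1(x))>0$ holds $\widetilde P$-almost surely. The heart of the centering is the verification of $M$-invariance: for $(\mu,x)\sim\widetilde{P}$ and $Q=\mathcal Q_1(x)$, the transition $(\mu,x)\mapsto (\mu^Q, T_Q x)$ decomposes as first re-centering $\mu$ at the $\mu$-random point $x$, which is handled by the quasi-Palm property of $P$, followed by the Heisenberg dilation of ratio $b$, which preserves the law by $S_{\log b}^*$-invariance. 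Ergodicity of $\widetilde{P}$ under $M$ transfers from the assumed $S_{\log b}^*$-ergodicity of $P$ through the same correspondence.

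I next check the atomlessness hypothesis of Theorem \ref{th.cp-th}. Since $\dim P > 0$ and $P$ is ergodic, $P$-a.e.\ $\mu$ is exact-dimensional of positive dimension, hence atomless, and the same is true of $\widetilde{P}$-a.e.\ $\mu$. Applying Theorem \ref{th.cp-th} to $\widetilde{P}$ then yields, for $P^\square$-a.e.\ $\mu$ and every $h \in \Heis^d$,
\[ \dim(\pi_h \mu) \geq \int \mathrm{d}\widetilde{P}(\nu)\ \dim(\pi_Z \nu) = \int \mathrm{d}P^\square(\nu)\ \dim(\pi_Z \nu), \]
the equality using that $\widetilde{P}$ has first marginal $P^\square$.

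The main technical obstacle is the $M$-invariance of $\widetilde{P}$. Strichartz cubes are translation-invariant only under the discrete sublattice $\Gamma$, while the quasi-Palm property provides translation invariance only after a $\mu$-random offset; reconciling these requires carefully aligning the zoom flow $S_{\log b}^*$ with the tile refinement $\mathcal Q_{m+1}$ refining $\mathcal Q_m$ and using the fact that $\partial T$ has Lebesgue measure zero, so that $\mu$-typical points avoid tile boundaries. All of this runs parallel to the Euclidean treatment in \cite{Hochman2010} but needs to be rechecked in the Heisenberg setting, since Strichartz tiles have fractal boundaries and are geometrically more intricate than Euclidean dyadic cubes.
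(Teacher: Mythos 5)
Your overall strategy --- reduce to Theorem \ref{th.cp-th} through the CP/FD correspondence of \cite{Hochman2010} --- is the same as the paper's, but the concrete construction you propose has a genuine gap at its central step. The measure $\widetilde{P} = \int \mathrm{d}P^\square(\mu)\, \delta_\mu \otimes \mu$ is \emph{not} $M$-invariant in general. The map $M$ sends $(\mu,x)$ to $(\mu^{Q}, T_Q x)$ where $T_Q$ translates by the lattice base point of the tile $Q = \mathcal Q_1(x)$ and then dilates; the quasi-Palm property, by contrast, only controls translation by the $\mu$-random point $x$ itself, and only up to \emph{equivalence} of distributions, not equality. These two defects cannot be patched by aligning the flow with the tiling: this is exactly why Hochman's Theorem 1.15 (which the paper invokes) does not assert that $\widetilde{P}$ is a CP-distribution, but rather constructs \emph{some} extended CP-distribution $Q$ (via a Ces\`{a}ro-average and weak-$*$ limit procedure starting from an object like $\widetilde P$) whose \emph{centering} $(\mu,x)\mapsto T_x^*\mu$ recovers $P$. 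Your claim that ``ergodicity of $\widetilde P$ transfers from $S^*_{\log b}$-ergodicity of $P$'' is likewise unjustified; the CP-distribution produced by that construction need not be ergodic even when $P$ is.

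The paper's proof is organized precisely around this failure of ergodicity. It takes the ergodic decomposition $Q = \int \mathrm{d}\mathbb{P}(\omega) Q_\omega$, applies Theorem \ref{th.cp-th} to each component $Q_\omega$, and obtains only that the bound $\dim(\pi_h\mu^\square) \geq \alpha - \varepsilon$ (with $\alpha = \int \mathrm{d}P(\nu)\dim(\pi_Z\nu^\square)$, an identification that itself requires an argument using that $\pi_Z$ is a group homomorphism, since $Q^\square$ and $P^\square$ are not tautologically equal) holds with \emph{positive} probability under $Q$, hence under $P$. The missing upgrade from positive probability to full measure is then supplied by observing that the set of $\mu$ satisfying the bound for all $h$ is $S_{\log b}$-invariant and invoking the $S^*_{\log b}$-ergodicity of $P$, before letting $\varepsilon \to 0$ along a countable sequence. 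This two-step argument (positive probability, then invariance plus ergodicity) is absent from your proposal and is not optional. On the positive side, your verification of the atomlessness hypothesis of Theorem \ref{th.cp-th} via $\dim P > 0$ and exact dimensionality is correct and is a point the paper leaves implicit.
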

Here $P^\square$ is the (well-defined) push-forward of $P$ through the mapping $\mu \mapsto \mu^\square$.
\begin{proof}
The point is that there exists an extended CP-distribution $Q$ such that $P$ is the push-forward of $Q$ through the ``discrete centering'' map 
\[ (\mu, x) \mapsto T^*_x \mu \]
See \cite{Hochman2010} Theorem 1.15; the proof in the Euclidean cases adapts to the Heisenberg setting in a straightforward way.

Let $Q = \int \mathrm{d} \mathbb{P} (\omega) Q_\omega$ be the ergodic decomposition of $Q$ (with respect to $M$).
Clearly
\[ \int \mathrm{d} \mathbb{P}(\omega) \int \mathrm{d} Q_\omega(\nu) \dim(\pi_Z \nu^\square) = \int \mathrm{d} Q(\nu) \dim(\pi_Z \nu^\square) \]
and the $S_{\log b}^*$-ergodicity of $P$ implies, by an easy argument, that the right-hand side is also equal to 
\[  \int \mathrm{d} P(\nu) \dim(\pi_Z \nu^\square) \]
(we also use the fact that $\pi_Z$ is a group homomorphism). Let us denote by $\alpha$ the common value of these integrals.

For $Q$-almost every $(\mu,x)$, if $Q_\omega$ is the ergodic 
component of $Q$ generated by $(\mu,x)$, 
\[ \dim (\pi_h \mu^\square) \geq \int \mathrm{d} Q_\omega(\nu) \dim(\pi_Z \nu^\square) \]
for all $h \in \Heis^d$ simultaneously by Theorem \ref{th.cp-th}; hence for any fixed $\varepsilon > 0$, if we pick $\mu$ at random according to $Q$, there is positive probability that 
\begin{equation} \label{eq.mu-ergd}
     \dim (\pi_h \mu^\square) \geq \alpha - \varepsilon 
\end{equation}
for all $h$. Because $P$ is the discrete centering of $Q$, we see that the same assertion holds 
if we pick $\mu$ at random according to $P$. 

The set of all $\mu$ such that \eqref{eq.mu-ergd} holds for all $h$ is $S_{\log b}$-invariant; by ergodicity of $P$, it must have full measure. We 
conclude by letting $\varepsilon \to 0$ along a countable sequence.
\end{proof}

\begin{remark}
    In general the inequality in Proposition \ref{th.proj-efd} can very much be strict. Indeed let $P$ be the Dirac 
    mass $P=\mathrm{Dirac}(\mu)$ where $\mu$ is the (suitably normalized) Lebesgue measure on $Z$. Then $P$ is a fractal distribution 
    ergodic with respect to $S_t^*$ for any $t \neq 0$. The projection $\pi_Z \mu$ is a Dirac mass, so it has dimension $0$, whereas 
    for any $h \in \Heis^d$ outside $Z$ the projection $\pi_h \mu$ is absolutely continuous with respect to the Lebesgue measure on a 
    smooth curve, so it has dimension $1$.
\end{remark}

\section{Examples}\label{s.applications}

\subsection{Limit sets of Schottky groups in good position}
\subsubsection{Patterson-Sullivan measures}
Let $G$ be the group $\mathbf{PU}(1,d+1)$ of isometries of the complex hyperbolic space $\Hyp^{d+1}$. Fix a discrete subgroup $\Gamma$ in $G$ and assume that $\Gamma$ is Zariski-dense 
and has finite Bowen-Margulis-Sullivan measure (for example, it may be convex-cocompact). Denote by $\delta_\Gamma$ the Poincar\'{e} exponent of $\Gamma$, $0 < \delta_\Gamma \leq 2d+2$.
The limit set $\Lambda_\Gamma$ is the set of accumulation points of any orbit $\Gamma \cdot O$, $O \in \Hyp^{d+1}$; it is a closed subset of the boundary: 
$\Lambda_\Gamma \subset \partial \Hyp^{d+1}$. The Hausdorff dimension of $\Lambda_\Gamma$, with respect to the visual Gromov-Bourdon metric on the boundary, is equal to $\delta_\Gamma$.

The limit set is the support of the classical family of Patterson-Sullivan measures $(\mu_x)_{x \in \Hyp^{d+1}}$. The measures are pairwise equivalent, and for $x,y \in \Hyp^{d+1}$ the 
Radon-Nikodym derivative is given by 
\[ \frac{\mathrm{d} \mu_y}{\mathrm{d} \mu_x}(\xi)= e^{-\delta_\Gamma b_\xi (y,x)} \]
where $b_\xi(y,x)$ is the usual Busemann function. These measures also have Hausdorff dimension $\delta_\Gamma$.

Let $m$ be the Bowen-Margulis-Sullivan on the frame bundle $\Gamma \backslash G$. We fix an Iwasawa decomposition $G=KAN$ as well as 
an identification $N \simeq \Heis^d$
and we disintegrate $m$ along $N$; this yields a Borel mapping $\sigma$ from $\Gamma \backslash G$ to the space of projective Radon measures on $\Heis^d$; see \cite{Dufloux2017} for details.

As in \cite{Dufloux2018}, we introduce the distribution on $\mathcal M^*$
\[ P = \int \mathrm{d}m (x) \mathrm{Dirac}(\sigma(x)^*) \]
and this is again a fractal distribution, ergodic with respect to $S_t$ for any $t \neq 0$. In \cite{Dufloux2018} we deal with the setting of the real hyperbolic space; the proof 
is identical in the complex hyperbolic case.

Let us make the distribution $P$ more explicit. To pick $\nu$ according to $P$, first choose $\xi,\eta \in \partial \Hyp^{d+1}$ according to the Patterson-Sullivan measure $\mu$ (which is 
atomless); then identify $\partial \Hyp^{d+1} \setminus \{\eta \}$ with $\Heis^d$ by sending $\xi$ to the origin and $\eta$ to infinity (via the Heisenberg stereographic projection). The 
measure $\nu$ is then equal to the push-forward, through this identification $\partial \Hyp^{d+1} \setminus \{\eta \} \simeq \Heis^d$, of the Radon measure $f \mu$, where $f$ is some continuous 
density we do not need to care about here.

\subsubsection{Radial projections of limit sets}
In \cite{Dufloux2017} we showed a Ledrappier-Young formula for conditional measures along group operations; this applies here to the distribution $P$ and yields the following result: 
for any relatively compact neighbourhood $U$ of the origin in $\Heis^d$, and $P$-almost every $\nu$, the Hausdorff dimension of the push-forward  measure $\pi_Z \nu_U$ 
is almost surely equal to 
\[ \int \mathrm{d} P^\square (\theta) \dim (\pi_Z \theta) \]
This number is called the \emph{transverse dimension of $\mu$ along $Z$}. It is also equal to $\dim (\pi_x\mu)$ for $\mu$-almost every 
$x \in \partial \Hyp^{d+1}$, where $\pi_x : \partial \Hyp^{d+1} \setminus \{x\} \to \mathcal A(x)$ now denotes the radial projection along
 chains passing through $x$ in $\partial \Hyp^{d+1}$.

\begin{theorem}
    Let $\Gamma$ be a discrete Zariski-dense subgroup of $G$, with finite BMS measure. Let $\mu$ be the Patterson-Sullivan measure of 
    exponent $\delta_\Gamma$. The radial projection of $\mu$ at \emph{any} point of $\partial \Hyp^{n+1}$ has dimension at least equal 
    to the transverse dimension of $\mu$ along $Z$. 
\end{theorem}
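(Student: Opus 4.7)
The plan is to apply Theorem~\ref{th.proj-efd} to the fractal distribution $P$ attached to $\Gamma$, then translate the resulting bound from $\Heis^d$ back to $\partial \Hyp^{d+1}$. Both hypotheses of Theorem~\ref{th.proj-efd} hold: $P$ is $S_t^*$-ergodic for every $t \neq 0$, and $\dim P = \delta_\Gamma > 0$. Writing
\[ \alpha = \int \mathrm{d} P^\square(\theta)\, \dim(\pi_Z \theta) \]
for the transverse dimension of $\mu$ along $Z$, the conclusion is that for $P^\square$-almost every $\nu$, $\dim(\pi_h \nu) \geq \alpha$ \emph{simultaneously} for every $h \in \Heis^d$.

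To translate back to $\partial \Hyp^{d+1}$, I would unwind the description of $P^\square$ recalled just above. A $P^\square$-typical $\nu$ is, up to a continuous positive density bounded above and below on its compact support, the normalized pushforward of $\mu|_W$ under the Heisenberg stereographic chart $\Psi$ of $\partial \Hyp^{d+1}$ based at a BMS-typical pair $(\xi,\eta)$ (sending $\eta$ to infinity and $\xi$ to the origin), where $W = \Psi^{-1}(T) \subset \partial \Hyp^{d+1}$ is a compact neighbourhood of $\xi$. Under $\Psi$, the Heisenberg projection $\pi_h$ agrees, up to a biLipschitz reparametrization of its target, with the chain projection $\pi_{\Psi^{-1}(h)}$ on $\partial \Hyp^{d+1}$, since their fibres (chains through $h$ in $\Heis^d$ and chains through $\Psi^{-1}(h)$ in $\partial \Hyp^{d+1}$ respectively) coincide under $\Psi$. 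As Hausdorff dimension is preserved by biLipschitz pushforwards and by reweighting with a bounded continuous density, one obtains, for BMS-typical $(\xi,\eta)$,
\[ \dim\bigl(\pi_x(\mu|_W)\bigr) \geq \alpha \quad \text{for every } x \in \partial \Hyp^{d+1} \setminus \{\eta\}. \]
A standard covering argument exploiting the quasi-Palm property of $P$ (to cover $\mu$-almost every point of $\partial \Hyp^{d+1}$ by countably many such restrictions) then upgrades this to $\dim(\pi_x \mu) \geq \alpha$.

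Finally, for an arbitrary $x \in \partial \Hyp^{d+1}$, I pick a BMS-typical $\eta$ with $\eta \neq x$, which is possible because the typical $\eta$'s have full $\mu$-measure and $\mu$ is atomless, and apply the previous step. The delicate point that needs to be spelled out carefully is the compatibility statement linking $\pi_h$ with $\pi_{\Psi^{-1}(h)}$: the lemma of Section~\ref{s.heisenberg} on radial projections only asserts covariance of $\pi_h$ under Heisenberg \emph{similarities}, whereas here the change of chart $\Psi \circ \Phi^{-1}$ is a genuine M\"obius transformation of $\Heis^d \cup \{\infty\}$. The required extension should nevertheless follow from the intrinsic projective construction of $\pi_h$ through the regressive product $\mathcal Q(x) \wedge \mathcal Q(y)$, which is preserved by the full group $\mathbf{PU}(1,d+1)$.
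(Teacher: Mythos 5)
Your proposal is correct and follows the same route as the paper: apply Theorem~\ref{th.proj-efd} to the fractal distribution $P$ built from the BMS measure, then use the explicit description of $P^\square$-typical measures as stereographic pushforwards of $\mu$ to transfer the bound to radial projections on $\partial \Hyp^{d+1}$. The paper's own proof is only two sentences long, and the points you single out --- the restriction to the Strichartz cube handled by a quasi-Palm covering argument, the M\"obius (rather than merely similarity) compatibility of the projections, and the choice of a typical base point distinct from the given $x$ --- are precisely the details it leaves implicit.
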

\begin{proof}
    We know by Theorem \ref{th.proj-efd} that if $\nu$ is a $P$-typical measure, then 
\[ \dim (\pi_h \nu) \geq \int \mathrm{d} P^\square (\theta) \dim (\pi_Z \theta) \]
for all $h \in \Heis^d$. Also, $\nu$ is equivalent to the push-forward of $\mu$ through some Heisenberg stereographic projection $\partial \Hyp^{d+1} \setminus \{ x \} \to \Heis^d$;
hence for all $y \in \partial \Hyp^{d+1} \setminus \{ x \}$, it holds that 
\[ \dim (\pi_y \mu) \geq \int \mathrm{d} P^\square (\theta) \dim (\pi_Z \theta) \]
\end{proof}

The problem of computing the transverse dimension of the Patterson-Sullivan measure along $Z$ was raised in \cite{Dufloux2017} and 
was our main motivation for looking to generalize Hochman-Shmerkin methods to Heisenberg settings. It does not seem that 
Theorem \ref{th.proj-efd} sheds any new light on this matter. On the other hand, there is a class of discrete groups 
for which computing the transverse dimension along $Z$ is not difficult; for this class of group (which we call ``Schottky groups in good position''), we obtain for free 
a strong Marstrand property:

\begin{corollary}
If $\Gamma$ is a Schottky subgroup of $G$ in good position, the radial projection of $\Lambda_\Gamma$ at any point of $\Hyp^{n+1}$ has dimension $\delta_\Gamma$.
The same holds for the Patterson-Sullivan measure.
\end{corollary}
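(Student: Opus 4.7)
The plan is to combine the preceding theorem with the fact that for Schottky groups in good position the transverse dimension of $\mu$ along $Z$ attains its maximal possible value $\delta_\Gamma$. The preceding theorem already gives the inequality
\[ \dim(\pi_y \mu) \geq \int \mathrm{d} P^\square (\theta)\, \dim(\pi_Z \theta) \]
for every $y \in \partial \Hyp^{d+1}$, and the right-hand side is exactly the transverse dimension of $\mu$ along $Z$ as recalled just above. So it suffices to identify this transverse dimension as $\delta_\Gamma$ under the good position assumption, and then to combine this with the trivial matching upper bound.

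First I would unwind the definition of a Schottky group in good position from \cite{Dufloux2017}: roughly, it means that the generators can be chosen so that their attracting/repelling basins avoid a neighbourhood of the vertical axis $Z$, in particular the projection $\pi_Z$ is essentially bi-Lipschitz when restricted to the limit set read in appropriate Heisenberg coordinates. The key consequence, established in \cite{Dufloux2017}, is that the transverse dimension of $\mu$ along $Z$ equals $\delta_\Gamma$; equivalently, for $P^\square$-typical $\theta$, the push-forward $\pi_Z \theta$ has Hausdorff dimension $\delta_\Gamma$. I would cite this directly rather than re-derive it.

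Given this, the preceding theorem applied to $\mu$ immediately yields
\[ \dim(\pi_y \mu) \geq \delta_\Gamma \]
for every $y \in \partial \Hyp^{d+1}$. The matching upper bound is immediate: the radial projection $\pi_y$ is locally Lipschitz outside any compact neighbourhood of $y$, and $\mu$ gives no mass to $\{y\}$ (since $\mu$ is atomless); by standard monotonicity of Hausdorff dimension under Lipschitz maps applied to a countable exhaustion of $\partial \Hyp^{d+1} \setminus \{y\}$ by compact sets, we get $\dim(\pi_y \mu) \leq \dim(\mu) = \delta_\Gamma$. The same argument, applied to the set $\Lambda_\Gamma$ instead of the measure $\mu$, gives both bounds for $\pi_y \Lambda_\Gamma$, since $\mu$ has full topological support in $\Lambda_\Gamma$ and $\dim \Lambda_\Gamma = \delta_\Gamma$.

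The only genuinely non-formal step is the identification of the transverse dimension with $\delta_\Gamma$ for Schottky groups in good position, and I would expect this to be the main obstacle if it were not already available; fortunately it is exactly what the good position hypothesis was designed to guarantee in \cite{Dufloux2017}, so the proof reduces to assembling the three ingredients above.
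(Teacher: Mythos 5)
Your proposal is correct and follows essentially the same route as the paper, whose entire proof is the one-line observation that $\int \mathrm{d} P^\square(\theta)\, \dim(\pi_Z\theta) = \delta_\Gamma$ for Schottky groups in good position (citing \cite{Dufloux2017}), combined implicitly with the lower bound from the preceding theorem and the trivial Lipschitz upper bound. You have merely made explicit the steps the paper leaves tacit.
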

\begin{proof}
    We have $\int \mathrm{d} P^\square (\theta) \dim (\pi_Z \theta)$, see \cite{Dufloux2017}.
\end{proof}

\subsection{Self-similar sets}
Let $\mathcal F = \{ f_1,\ldots,f_k \}$ be a family of contractive Heisenberg similarity transformations: 
there are real numbers $r_1,\ldots,r_k \in\ ]0,1[$ such 
that 
\[ d(f_i(x),f_i(y))=r_i d(x,y) \]
for $1 \leq i \leq k$ and $x,y \in \Heis^d$. Explicitly, each $f_i$ is a composition 
\[ f_i = \tau_i \circ h_i \circ u_i \]
where $u_i$ is a \emph{Heisenberg rotation about the vertical axis}, meaning
\[ u_i (u,s) = (U_i\cdot u,s) \]
where $U_i \in \mathbf{U}(d)$; $h_i$ is a Heisenberg homothety $x \mapsto r_i \cdot x$, and $\tau_i$ is the left translation by some element $x_i$ of $\Heis^d$.

We denote by $H$ the closed subgroup of $\mathbf{U}(d)$ generated by the $U_i$, $1 \leq i \leq k$. 

It is well-known that there is a unique compact subset $X$ of $\Heis^d$ that is invariant by $\mathcal F$: $f_i(X) \subset X$ for each $i$.

Let 
$\Lambda = \{ 1,\ldots,k \}$ and $\phi : \Lambda^{\mathbf{N}} \to \Heis^d$ be the coding map 
\[ \phi : (a_i)_{i \geq 0} \mapsto \lim_{n \to \infty} f_{a_0} \circ \cdots \circ f_{a_n} (x) \]
where $x$ is any point of $\Heis^d$. 

We assume that $\mathcal F$ satisfies the open set condition: there is an open subset $U \subset \Heis^d$ such that the $f_i(U)$ are 
pairwise disjoint and contained in $U$. Let $\tilde \mu$ be a product measure on $\Lambda^{\mathbf N}$. The image $\mu = \phi \tilde \mu$ is called  
a self-similar measure.

\begin{lemma}
There exist an ergodic fractal distribution $P$ such that any $P^\square$-typical measure $\nu$ is absolutely continuous with 
respect to $\tau \circ h \circ u (\mu)$ for some translation $\tau$, some homothety $h$, and some rotation $u \in H$.
\end{lemma}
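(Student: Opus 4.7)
The plan is to realize $P$ as the fractal distribution generated by $\mu$ through its own scenery flow, and then pass to an ergodic component. Let $\tilde\mu$ be the Bernoulli product measure on $\Lambda^{\mathbf{N}}$ and $\sigma$ the shift. For $\tilde\mu$-almost every $\omega = (a_0, a_1, \ldots)$ set $x = \phi(\omega) \in \Heis^d$, and for $t \geq 0$ consider the scenery $\xi_\omega(t) = S_t^* T_x^* \mu \in \mathcal M^*$. I would take $P$ to be any weak-$*$ accumulation point of the Ces\`{a}ro averages $\frac{1}{N} \int_0^N \mathrm{Dirac}(\xi_\omega(t))\, \mathrm{d}t$; then $P$ is $S_t^*$-invariant by construction, and the quasi-Palm property follows from the standard argument of \cite{Hochman2010}, since recentering at a $\mu$-typical auxiliary point amounts, via Birkhoff's pointwise ergodic theorem, to reparametrizing the orbit on the symbolic Bernoulli factor.

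To identify a typical sample, I would use the cylinder return times $t_n = -\log(r_{a_0} \cdots r_{a_{n-1}})$. Iterating the coding relation $\phi(\omega) = f_{a_0}(\phi(\sigma \omega))$ yields $x = F_n(\phi(\sigma^n \omega))$, where $F_n = f_{a_0} \circ \cdots \circ f_{a_{n-1}}$ is a Heisenberg similarity of contraction ratio $e^{-t_n}$ and rotational part $u_n = U_{a_0} \cdots U_{a_{n-1}} \in H$. Tracking this through the definition of $\xi_\omega(t_n) = S_{t_n}^* T_x^* \mu$, one checks that on the unit Strichartz cube $T$ the main contribution is the rotation by $u_n$ of the recentered measure $T^*_{\phi(\sigma^n \omega)} \mu$ (up to a bounded density factor coming from the Bernoulli weights), the sibling cylinders sitting at bounded distance outside $T$ by the open set condition. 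For intermediate $t \in [t_n, t_{n+1})$ the scenery is a further Heisenberg translate and homothety of $\xi_\omega(t_n)$, so $\xi_\omega(t)^\square$ is absolutely continuous with respect to $\tau \circ h \circ u(\mu)$ for appropriate $\tau$, $h$, and $u \in H$. Passing to a weak-$*$ limit and using compactness of $H$ together with the boundedness of the relevant translations and scales once we restrict to $T$, the property survives.

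Ergodicity is arranged by applying the ergodic decomposition of fractal distributions (see \cite{Hochman2010}) to write $P = \int P_\alpha\, \mathrm{d} \mathbb{P}(\alpha)$ with each $P_\alpha$ an ergodic FD; by Fubini the absolute continuity statement descends to $P_\alpha^\square$-typical samples for $\mathbb{P}$-almost every $\alpha$, and any such $P_\alpha$ serves as our desired $P$. The step I expect to be the main obstacle is verifying quasi-Palm directly in $\Heis^d$: unlike the Euclidean case, Heisenberg left translations and Heisenberg homotheties do not commute, so recentering $\xi_\omega(t)$ at a $\mu$-typical auxiliary point $x'$ introduces a genuinely noncommutative correction whose distributional law must still be shown to match $P$. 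Controlling this correction requires the continuity of the cocycle carried by the Pansu derivative (Lemma~\ref{lemma.pansu-derivative}); once this is in hand, the standard Birkhoff-type argument should carry over.
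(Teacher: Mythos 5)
Your proposal follows essentially the same route as the paper, whose proof simply defers to \cite{Hochman2010}, paragraph 4.3: the fractal distribution generated by the scenery flow of the self-similar measure, with the typical samples identified along the cylinder return times $t_n=-\log(r_{a_0}\cdots r_{a_{n-1}})$ and ergodicity obtained from the symbolic system together with the closed rotation group $H$. The obstacle you flag at the end is not a genuine one: Heisenberg dilations are group automorphisms, so $S_t \circ T_x = T_{S_t x} \circ S_t$ exactly as in the Euclidean case, and the quasi-Palm verification carries over without any noncommutative correction.
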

\begin{proof}
The proof is identical to \cite{Hochman2010} paragraph 4.3. 
\end{proof}

The elements of $\mathcal F$ pass to the quotient and define a family of contractive similarity transformations of $\C^d$; let 
$\overline{\mathcal F}$ be this quotient family.
\begin{theorem}
    Assume that $\overline{\mathcal F}$ satisfies the open set condition. Then for any $x \in \Heis^d$, 
    \[ \dim (\pi_x \mu) = \dim (\pi_Z \mu) = \dim (\mu) \]
    and 
    \[ \dim (\pi_x X) = \dim (\pi_Z X) = \dim (X) \]
\end{theorem}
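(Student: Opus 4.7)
The plan is to apply Theorem~\ref{th.proj-efd} to the ergodic fractal distribution $P$ produced by the preceding lemma, compute the integral on its right-hand side, and then transfer the resulting inequality from a $P^\square$-typical measure back to $\mu$ itself (and to $X$) using the covariance of the projections $\pi_h$ under Heisenberg similarities.

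First, let $s$ be the similarity dimension determined by $\sum r_i^s = 1$ and pick $\mu$ to be the self-similar measure with weights $p_i = r_i^s$; under the Heisenberg open set condition the standard similarity-dimension formula (valid in the \Kor metric setting) gives $\dim(\mu) = s = \dim(X)$. Because each $f_i$ projects through $\pi_Z$ to a Euclidean similarity of $\C^d$ with the \emph{same} contraction ratio $r_i$, the pushforward $\pi_Z\mu$ is self-similar for $\overline{\mathcal F}$ with the same weights, and $\pi_Z X$ is the attractor of $\overline{\mathcal F}$. The OSC assumed for $\overline{\mathcal F}$ then gives $\dim(\pi_Z\mu) = s$ and $\dim(\pi_Z X) = s$, settling the middle equalities.

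Next, a $P^\square$-typical $\nu$ is absolutely continuous with respect to $T_\nu\mu$, where $T_\nu = \tau\circ h\circ u$ is a Heisenberg similarity whose rotational part fixes the vertical axis. All three factors commute with $\pi_Z$ up to a Euclidean similarity of $\C^d$ (translations project to translations, Heisenberg dilations to ordinary dilations, vertical-axis rotations to elements of $\mathbf U(d)$). Hence $\pi_Z T_\nu\mu$ is a Euclidean similarity image of $\pi_Z\mu$ and is exact dimensional of dimension $s$; coupled with $\nu\ll T_\nu\mu$, this forces $\dim(\pi_Z\nu) = s$ for $P^\square$-almost every $\nu$. The integral $\int dP^\square(\nu)\,\dim(\pi_Z\nu)$ therefore equals $s$, and Theorem~\ref{th.proj-efd} delivers $\dim(\pi_h\nu)\geq s$ for every $h\in\Heis^d$ and $P^\square$-a.e.\ $\nu$.

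The last step is to transfer back to $\mu$ and to $X$. Using the covariance relation $\pi_h\circ T = f\circ\pi_{T^{-1}h}$ from the unlabeled lemma on the basic properties of $\pi_h$ (applied with $T = T_\nu$), and pushing forward $\nu\ll T_\nu\mu$ through $\pi_h$, we get $\dim(\pi_h T_\nu\mu)\geq\dim(\pi_h\nu)\geq s$, and then $\dim(\pi_{T_\nu^{-1}h}\mu)\geq s$. As $h$ ranges over $\Heis^d$ so does $T_\nu^{-1}h$, giving $\dim(\pi_x\mu)\geq s$ for every $x$; the reverse inequality is immediate from the local Lipschitz continuity of $\pi_x$. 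The set statement follows since $\mathrm{supp}(\mu) = X$, whence $\dim(\pi_x X)\geq\dim(\pi_x\mu) = s$, with the upper bound again from the Lipschitz property. The main technical obstacle I anticipate is checking carefully that $\dim(\pi_Z\nu) = s$ (rather than only the a priori bound $\leq s$) for $P^\square$-a.e.\ $\nu$; this should follow from exact dimensionality of self-similar measures under OSC combined with the explicit structure of $P$, ensuring the Radon--Nikodym densities involved are non-degenerate on a set of full mass.
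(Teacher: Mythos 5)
Your overall strategy --- apply Theorem~\ref{th.proj-efd} to the fractal distribution $P$ from the preceding lemma, evaluate the integral, and transfer back --- matches the paper's, but the transfer step contains a genuine error of direction. From $\nu \ll T_\nu\mu$ you correctly get $\pi_h\nu \ll \pi_h T_\nu\mu$, but for the (lower) Hausdorff dimension of measures, $\dim\nu = \inf\{\dim A : \nu(A)>0\}$, absolute continuity $\nu \ll \mu$ yields $\dim\nu \geq \dim\mu$: the absolutely continuous measure charges fewer sets, so the infimum is taken over a smaller family. Hence the correct consequence is $\dim(\pi_h\nu) \geq \dim(\pi_h T_\nu\mu)$, which is the \emph{opposite} of the inequality $\dim(\pi_h T_\nu\mu) \geq \dim(\pi_h\nu)$ your chain relies on; the lower bound $\dim(\pi_h\nu)\geq s$ supplied by Theorem~\ref{th.proj-efd} therefore does not pass to $T_\nu\mu$, and the conclusion $\dim(\pi_{T_\nu^{-1}h}\mu)\geq s$ does not follow. (Note that the same one-sidedness is precisely what makes your computation of $\int \dim(\pi_Z\nu)\,\mathrm{d}P^\square$ legitimate: there you only need $\dim(\pi_Z\nu)\geq s$, and that is the direction absolute continuity provides; your closing worry about getting equality there is beside the point.) Repairing this requires additional input --- e.g. that a typical $\nu$ is \emph{equivalent} to a restriction of $T_\nu\mu$ together with a decomposition of $\mu$ into countably many similar pieces, or, as in Hochman--Shmerkin, running the local entropy averages bound directly on $\mu$ using that $\mu$ generates $P$. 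The paper is itself terse at this point, but it does not assert the reversed inequality.

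A second, lesser issue: the theorem concerns an arbitrary self-similar measure $\mu = \phi\tilde\mu$ (arbitrary product weights), whereas you silently replace $\mu$ by the natural measure with weights $r_i^s$. That suffices for the statement about $X$ but not for the statement about $\mu$. Here your route genuinely differs from the paper's: the paper proves $\dim(\pi_Z\mu)=\dim(\mu)$ for every self-similar $\mu$ by disintegrating $\mu$ along $\pi_Z$ and invoking the Ledrappier--Young formula $\dim(\mu)=\dim(\pi_Z\mu)+\dim(\mu_u)$, then observing that the open set condition for $\overline{\mathcal F}$ makes $\pi_Z$ injective on $X$, so the fiber measures $\mu_u$ are Dirac masses of dimension zero. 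Your similarity-dimension computation is more elementary and self-contained for the natural measure, but to cover general weights you would need the dimension formula for both $\mu$ and $\pi_Z\mu$ with arbitrary weights, or the paper's fiber argument.
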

\begin{proof}
    Let $P$ be an ergodic fractal distribution as in the Lemma above. All we have to do is show that 
    \[ \dim(\pi_Z \mu) = \dim (\mu) \text. \]

    Let us disintegrate $\mu$ along $\pi_Z$:
    \[ \mu = \int \mathrm{d} (\pi_Z \mu) (u)\ \mu_u \]
    where $\mu_u$ is supported on $\pi_Z^{-1}(u)$ for $\pi_Z \mu$-almost every $u$. According to the Ledrappier-Young formula, 
    the conditional measure $\mu_u$ is almost surely exact dimensional, and its dimension is almost surely equal to a constant $\dim (\mu_u)$
    which satisfies the equality 
    \[ \dim(\mu) = \dim (\pi_Z \mu) + \dim (\mu_u) \]
    To apply the Ledrappier-Young formula, we do not need to assume that $\overline{\mathcal F}$ satisfies the open set condition; ergodicity 
    of $P$ is enough. The reason for our assumption that $\overline{\mathcal F}$ satisfies the open set condition is the following consequence: 
    in this case the restriction of $\pi_Z$ to the limit set 
    $X$ (which is equal to the support of $\mu$) is injective; hence $\mu_u$ is almost everywhere a Dirac mass, and has dimension $0$. In this 
    case the Ledrappier-Young formula becomes
     \[ \dim(\mu) = \dim (\pi_Z \mu) \]

     The first equality in the conclusion of the Theorem follows from this by virtue of Theorem \ref{th.proj-efd}; the second 
     equality is a consequence of the first because there is a self-similar measure $\mu$ such that $\dim(\mu) = \dim(X)$.

     We remark that the Ledrappier-Young formula has not been explicitly proved for ergodic fractal distributions in Heisenberg groups, only in 
     Euclidean spaces. See \cite{Dufloux2017} for a proof of the Ledrappier-Young formula that works in Heisenberg groups, in a slightly different setting 
     (conditional measures along group operations) that can be adapted to deal with ergodic fractal distributions.
\end{proof}

\bibliographystyle{plain}
\bibliography{bibli}

\end{document}